\numberwithin{equation}{section}
\begin{document}   

\title[LDP for MH chains]{A large deviation principle for the empirical measures of Metropolis-Hastings chains}


\author[F.~Milinanni]{Federica Milinanni}
\author[P.~Nyquist]{Pierre Nyquist}
\address[F.~Milinanni, P.\ Nyquist]{KTH Royal Institute of Technology}

\curraddr{Department of Mathematics, KTH, 100 44 Stockholm, Sweden}
\email{{fedmil@kth.se}, {pierren@kth.se}}
\thanks{}

\subjclass[2010]{60F10, 60C05; secondary  60G57, 60J05}

\keywords{Large deviations, empirical measure, Markov chain Monte Carlo, Metropolis-Hastings}

\begin{abstract} 
To sample from a given target distribution, Markov chain Monte Carlo (MCMC) sampling relies on constructing an ergodic Markov chain with the target distribution as its invariant measure. For any MCMC method, an important question is how to evaluate its efficiency. One approach is to consider the associated empirical measure and how fast it converges to the stationary distribution of the underlying Markov process. Recently, this question has been considered from the perspective of large deviation theory, for different types of MCMC methods, including, e.g., non-reversible Metropolis-Hastings on a finite state space, non-reversible Langevin samplers, the zig-zag sampler, and parallell tempering. This approach, based on large deviations, has proven successful in analysing existing methods and designing new, efficient ones. However, for the Metropolis-Hastings algorithm on more general state spaces, the workhorse of MCMC sampling, the same techniques have not been available for analysing performance, as the underlying Markov chain dynamics violate the conditions used to prove existing large deviation results for empirical measures of a Markov chain. This also extends to methods built on the same idea as Metropolis-Hastings, such as the Metropolis-Adjusted Langevin Method or ABC-MCMC. In this paper, we take the first steps towards such a large-deviations based analysis of Metropolis-Hastings-like methods, by proving a large deviation principle for the the empirical measures of Metropolis-Hastings chains. In addition, we also characterize the rate function and its properties in terms of the acceptance- and rejection-part of the Metropolis-Hastings dynamics.

\end{abstract}

\maketitle

\section{Introduction}

Sampling from a given probability distribution is an essential problem in a range of areas, for example biology, physics, epidemiology and ecology, and statistics. The most common approach is Markov chain Monte Carlo (MCMC), which allows the user to sample from a target probability distribution $\pi$, by generating an ergodic Markov chain $\{X_i\}_{i\geq 0}$ with $\pi$ as stationary distribution. These sampling techniques are particularly helpful when it is not possible to use methods that simulate directly from $\pi$, for example for computing posterior distributions in a Bayesian setting, or more generally when $\pi$ is only known up to a normalizing constant. Because of this, MCMC methods are now widely used across scientific disciplines, and are integral tools in areas such as computational chemistry and physics, statistics and machine learning \cite{RobertChristianP2004MCSM, AG07, AndrieuEtAl03}.

Because of their prevalence in a range of fields, the performance of MCMC algorithms has become an important topic within applied probability and computational statistics. In principle, even the standard Metropolis-Hastings algorithm \cite{Metropolis53, Hastings70} can be used to sample from essentially any target distribution $\pi$. However, when the underlying problem, and thus the distribution $\pi$, becomes more and more complex, convergence speed or the cost per iteration becomes an issue.  Analysing and improving the convergence speed of a given class of algorithms, as well as comparing the performance of different types of algorithms, is therefore not only interesting from a theoretical perspective, it is also of central importance for applications, where fast and accurate methods are needed for increasingly complex problems. 

When analysing performance of MCMC methods, the rate of convergence of time averages is a central quantity for comparing different metods, and for choosing hyperparameters. The fundamental idea underlying MCMC is that for an observable $f \in L^1(\pi)$, for an ergodic Markov chain $\{ X _i \} _{i \in \bN}$ with invariant distribution $\pi$, the $n$-step average $\frac{1}{n} \sum _{i=0} ^{n-1} f(X_i)$ can be used to approximate the expectation $\bE _\pi [ f(X)]$. This average can be viewed as the integral of $f$ with respect to the empirical measure of the Markov process. The rate of convergence of the empirical measure is therefore directly linked to the performance of a given MCMC method.

Because of the role the empirical measure plays in MCMC, and for Monte Carlo methods in general, in the past decade there has been an increasing interest in using the theory of large deviations for empirical measures to study the performance of MCMC methods \cite{DupuisDollLiu, Plattner11, Rey-Bellet2015,Rey-BelletLuc2015Vrfi,Rey-BelletLuc2016ItCo,DollDupuisNyquist,BierkensNyquistSchlottke, BierkensJoris2016NM}. However, surprisingly, existing large deviation results do not cover the empirical measure arising from the Metropolis-Hastings algorithm \cite{Metropolis53, Hastings70} on a general state space. Thus, in order to use a large deviation approach to analyse this foundational algorithm, or more advanced MCMC methods built on the same ideas as Metropolis-Hastings—such as the Metropolis-Adjusted Langevin Method (MALA) \cite{Besag94, RT96b, RR98} and methods based on Approximate Bayesian Computation (ABC) (see \cite{MMPS03, Beau19} for an overview and further references)—the relevant large deviation results must first be established. This is the main contribution of this paper: we prove the large deviation principle for the empirical measures associated with Markov chains arising from the Metropolis-Hastings algorithm. This sets the stage for future work proving similar results for Markov chains with dynamics that resemble those of Metropolis-Hastings, and for analysing the corresponding MCMC methods.  

The theory of large deviations has become a cornerstone in modern probability theory, with a wide range of applications. In the context of Monte Carlo methods, it has been known for a long time that for rare-event simulation, sample-path large deviations results are integral to analysing and designing efficient algorithms; see \cite{Bucklew04, AG07, BudhirajaAmarjit2019AaAo} and the references therein. In the MCMC setting, the theory remains much less explored for analysing performance and designing new, efficient methods. Instead, standard tools for convergence analysis of sampling methods based on ergodic Markov processes include: the spectral gap of the associated dynamics, mixing times of the process, asymptotic variance and functional inequalities (Poincar\'e, log-Sobolev) \cite{BedardRosenthal08, Rosenthal03, DiaconisHolmesNeal00, FHP+10, FdSH+93, HHMS+05}. However, these tools mainly provide information about convergence of the associated $n$-step transition operator or the law of the process, neither of which are directly linked to the convergence of the empirical measure. Empirical measure large deviations are instead concerned precisely with the convergence of the empirical measure. This is in turn linked to the transient behaviour of the underlying Markov chain, which is of central importance for the performance of MCMC methods. 

To the best of our knowledge, the first works on using large deviation theory to study the convergence of the empirical measures arising from MCMC sampling are \cite{DupuisDollLiu, Plattner11}. Therein, the authors analyse the performance of parallel tempering, one of the most frequently applied MCMC methods in computational chemistry and physics, from the perspective of large deviations, leading to the construction of a new type of method known as infinite swapping. In the subsequent work \cite{DollDupuisNyquist}, empirical measure large deviations and associated stochastic control problems are used to analyse the convergence properties of parallel tempering and infinite swapping. In \cite{DupuisWu} the authors study methods like parallel tempering and infinite swapping in the low-temperature regime, and use empirical measure large deviations to solve the long-standing open problem of optimal temperature selection. Similarly, in \cite{BierkensJoris2016NM, Rey-Bellet2015,Rey-BelletLuc2015Vrfi,Rey-BelletLuc2016ItCo} a large deviation approach is used to analyse certain irreversible samplers. In \cite{BierkensNyquistSchlottke}, large deviations for the empirical measures of certain piecewise deterministic Markov processes, including the zig-zag sampler, are obtained, and the associated rate function is used to address a key question concerning the optimal choice of the so-called switching rate of the zig-zag process. The results therein also highlight the differences in considering convergence of empirical averages, and in studying the convergence to equilibrium with, e.g., the spectral gap; see also \cite{Rosenthal03, VM20}.

In this paper we focus on the Metropolis-Hastings algorithm \cite{Metropolis53} (described in Section~\ref{sec:MH}), the most classical MCMC method and the main building block for many more advanced methods \cite{RobertChristianP2004MCSM,AG07, AndrieuEtAl03, Tierney98}. Because of its importance in the area of Monte Carlo sampling, the method is well-studied and classical results on convergence properties and performance include \cite{MengersenTweedie, RT96, GGR97, RR97, RR01, CRR05}; see also \cite{MT12, DMPS18} and the references therein for the general theory of Markov chains. However, despite significant efforts over long time, there are still gaps in our understanding of the theoretical properties of this fundamental class of algorithms. As an example, in a recent tour de force \cite{AndrieuChristophe2022Ecbf, AndrieuChristophe2022PifM} the authors develop a functional analytical framework, aimed at analysing Markov chains arising in sampling algorithms, and obtain the first explicit convergence bounds for the Metropolis algorithm. In \cite{BierkensJoris2016NM} a non-reversible version of Metropolis-Hastings is introduced and studied. One of the methods used for analysing performance is large deviations for the associated empirical measure. Because the setting is a finite state space $S$, the classical results \cite{DV75a, DV75b, DV76}, due to Donsker and Varadhan, give the large deviation principle. To the best of our knowledge, this is the only work that studies large deviations for Markov chains arising from algorithms of Metropolis-Hastings-type. In \cite{BierkensJoris2016NM} the focus is on the effects of non-reversibility, and there is thus no attempt of extending the large deviation results to the setting where the state space $S$ is instead a (uncountable) subset of $\bR ^d$. This is the setting typically encountered in applications.

The pioneering work by Donsker and Varadhan \cite{DV75a, DV75b, DV76} is often the starting point for empirical measure large deviations for Markov processes, and their results have been extended in numerous directions; see \cite{DZ94, FK06, BudhirajaAmarjit2019AaAo} and the references therein. However, it is pointed out in \cite{DupuisLiu2015} (see also Section \ref{sec:LDPEmpMeas}) that even for fairly simple continuous-time pure-jump processes, the results by Donsker and Varadhan, or more general versions of them such as in, e.g., \cite{KM05}, do not hold. This is because all such large deviation results rely on the transition probability function of the Markov process to have a density with respect to some reference measure. In \cite{DupuisLiu2015} the authors show how this condition can be replaced by a more general transitivity condition (Condition \ref{cond:DE}) to ensure that a large class of processes are covered. However, for the Metropolis-Hastings chains, neither of these conditions hold due to the rejection part of the dynamics. The purpose of this paper is to show that, despite this violation of the standard transitivity conditions, the empirical measures of the Metropolis-Hastings chain do satisfy a large deviation principle. The proof is based on the weak convergence approach \cite{DupuisEllis, BudhirajaAmarjit2019AaAo}, which is described in some more detail in Sections \ref{sec:LDPEmpMeas} and \ref{sec:LDPMH}. With the large deviation results established, our future work is aimed at (i) analysing the performance and comparing various Metropolis-Hastings algorithms using the rate function, and comparing the conclusion to, e.g., the recent results \cite{AndrieuChristophe2022Ecbf}; (ii) investigate whether optimal scaling results, similar to the celebrated results in \cite{GGR97, RR01}, can be obtained from a large deviation perspective; (iii) extend the results to cover more advanced MCMC algorithms, such as MALA and ABC-MCMC. These topics are all significant undertakings in their own right and we leave them to be investigated separately in future work.   

The remainder of the paper is organized as follows. In Section~\ref{sec:preliminaries} we provide the preliminaries needed for the paper: notation and definitions, a brief overview of large deviations for empirical measures, and a description of the Metropolis-Hastings algorithm. Next, in Section~\ref{sec:ass} we present the assumptions used for the Metropolis-Hastings chain. The main result is stated in Theorem~\ref{thm:LDP} in Section~\ref{sec:LDPMH}. In this section we also show some properties of the associated rate function. The proof of Theorem \ref{thm:LDP} is divided into two parts, in Sections~\ref{sec:upper} and \ref{sec:lower} we prove the Laplace upper and lower bound, respectively, which combined prove Theorem~\ref{thm:LDP}.

\section{Preliminaries}
\label{sec:preliminaries}
\subsection{Notation and definitions}
\label{sec:notation}
Throughout the paper we work with some probability space $\left( \Omega, \calF, \bP \right)$. We use a.s.\ and w.p.\ 1 as shorthand for almost sure, or almost surely, and with probability 1, respectively. 

For a Polish space $S$, with a translation invariant metric $d_S$, $\mathcal{B}(S)$ is the Borel $\sigma-$algebra on $S$, and 
$C(S)$ and $C_b(S)$ denote the spaces of functions $f: S \to \bR$ that are continuous, and bounded and continuous, respectively. For any $r \in \bR _+$ and $x \in S$, $B_r (x)$ is the open ball of radius $r$ with center in $x$:
\[
    B_r (x) = \{ y \in S: d_S(x,y) < r \}. 
\]
When $S \subseteq \bR ^d$, for some $d \geq 1$, we take $\lambda$ to denote Lebesgue measure on $\bR ^d$. We abuse notation a bit in that $\lambda$ is generically taken to represent Lebesgue measure, regardless of the underlying dimension $d$. For integration with respect to $\lambda$ we use the standard notation $dx$ for $\lambda(dx)$.

For a measure $\eta$ on $S$, and measurable function $f$ on $S$, we denote the integral of $f$ with respect to $\eta$ by $\eta(f) = \int _S f(x) \eta (dx) $. When $f$ is the indicator of a set $A$, we write $\eta (A) = \int _A \eta (dx)$.

The space of probability measures on $S$ is denoted by $\mathcal{P}(S)$. Given $\gamma\in\mathcal{P}(S^2)$, denote by $[\gamma]_1$ and $[\gamma]_2$ the first and second marginals of $\gamma$, respectively. For $\mu\in\mathcal{P}(S)$, define
\begin{equation}
    \label{eq:Amu}
    A(\mu)=\{\gamma\in\mathcal{P}(S^2):[\gamma]_1=[\gamma]_2=\mu\}.
\end{equation}
We consider the topology of weak convergence on $\calP (S)$: $\nu _n \to \nu$ in this topology if, for all $f \in C_b (S)$,
\[
  \nu _n (f) =   \int _S f(x) \nu _n (dx) \to \int _S f(x) \nu (dx) = \nu (f), \ n \to \infty.
\]
We use $\nu_n\Rightarrow\nu$ as shorthand notation for $\{\nu_n\}\subset \mathcal{P}(S)$ converging weakly to $\nu \in \mathcal{P}(S)$. Unless otherwise stated, we equip $\calP (S)$ with the 
\textit{L\'evy-Prohorov metric}, denoted $d_{LP}$: for $\nu, \mu \in \calP (S)$,
\[
d_{LP} (\nu, \mu) = \inf \left\{ \epsilon >0: \ \nu (A) \leq \mu(A ^\epsilon) + \epsilon, \ \textrm{for all closed subsets } A \subset S     \right\},
\]
where $A^\epsilon = \{ x \in S: d_S(x,A) < \e \}$. This metric is compatible with the topology of weak convergence (see, e.g., \cite{BudhirajaAmarjit2019AaAo}, Theorem A.1), and turns $\calP (S)$ into a Polish space. For any signed measure $\eta$ on $S$, the \textit{total variation norm} of $\eta$, $\norm{\eta} _{TV}$, is defined as
\[
	\norm{\eta} _{TV} = \sup _{f} \left| \eta (f) \right|, 
\]
where the supremum is taken over all measurable functions bounded by 1. For $\nu, \mu \in \calP(S)$, the total variation norm provides an upper bound on $d_{LP}$:
\[
	d_{LP} (\nu, \mu) \leq \norm{\nu - \mu} _{TV}.
\]

For a measurable space $(\calY, \calA)$, let $q(y,dx)$ be a collection of probability measures on $S$ parameterized by $y \in Y$. Then $q$ is called a \textit{stochastic kernel} on $S$ given $\calY$ if, for every $A \in \calB (S)$, the map $y \mapsto q(y,A) \in [0,1]$ is measurable.

For a Markov chain $\{ X_i \} _{i \in\bN}$ taking values in $S$, for a given $x_0 \in S$, we denote by $\bP _{x_0}$ the distribution of $\{ X_ i \} _{i \in\bN}$  starting at $x_0$. The associated expectation operator is denoted by $\bE _{x_0}$. The \textit{transition probability function}, or \textit{transition kernel}, of a Markov chain is a stochastic kernel $q$, such that the distribution of $X_i$ given $X_{i-1}$ is given by $q(X_{i-1}, \cdot)$. We say that a transition probability function $q(x,dy)$ on $S \times \calP (A)$ satisfies the \textit{Feller property} if, for any sequence $\{ x_n \} _{n \in \bN}$ such that $x_n \to x \in S$ as $n\to \infty$, $q(x_n, \cdot) \Rightarrow q(x, \cdot)$.

Given a measure $\mu\in\mathcal{P}(S)$ and a transition kernel $q(x,dy)$, we say that $\mu$ is \textit{invariant} for $q$, or for the corresponding Markov chain, if for all $A\in\mathcal{B}(S)$,
\begin{equation*}
    \mu(A)=\int_Sq(x,A)\mu(dx).
\end{equation*}

For $\nu \in \calP (S)$, $R(\cdot \parallel \nu) : \calP (S) \to [0, \infty]$ is the \textit{relative entropy} (with respect to $\nu$), defined by
\begin{equation*}
    R(\mu\parallel \nu) = \begin{cases} \int_S\log \left( \frac{d\mu}{d\nu}\right) d\mu, &  \mu \ll \nu, \\
    + \infty, & \textrm{otherwise.}
    \end{cases}
\end{equation*}
We recall the following properties of relative entropy (see Lemmas 1.4.1 and 1.4.3 in \cite{DupuisEllis}): $R(\cdot\parallel\cdot)$ is jointly convex and jointly lower semi-continuous with respect to the weak topology on $\calP (S) ^2$, and $R(\mu \parallel \nu) = 0$ if and only if $\mu = \nu$. Another  useful property follows from the chain rule for relative entropy (see Theorem~2.6 and Corollary~2.7 in \cite{BudhirajaAmarjit2019AaAo}): given two transition kernels $p,q$, for any $\mu \in \calP (S)$, 
\begin{equation}
\label{eq:chainRuleCorollary}
    R(\mu\otimes p\parallel \mu\otimes q)=\int_SR(p(x,\cdot)\parallel q(x,\cdot))\mu(dx).
\end{equation}

Lastly, for a set $A$, $A^\circ$ and $\bar A$ denote the \textit{interior} and \textit{closure} of the set, respectively, and $x \mapsto I\{ x \in A \}$ is the indicator function of the set $A$. When the set is a singleton, $A=\{y\}$, we write $I\{x=y\}$. We also use $\delta _y$ to denote this case.

\subsection{Large deviations for empirical measures of a Markov chain}
\label{sec:LDPEmpMeas}
Consider a Markov chain $X = \{X_i \} _{i \geq 0}$ with state space $S$ and transition probability function $p$. The \textit{empirical measure}, $L ^n$, associated with the chain $X$ is defined as 
\begin{align}
 \label{eq:empiricalMeasure}
    L^n (\cdot) = \frac{1}{n} \sum _{i=0} ^{n-1} \delta _{X_i} (\cdot).  
\end{align}
For each  $n$, this is a random element of $\calP(S)$. We can also view $\{ L^n \} _{n \geq 0}$ as a stochastic process in $\calP (S)$. 

In the context of MCMC methods, empirical measures are essential objects as they are used for forming approximations for any observable: for a given observable $f \in C_b (S)$, we have 
\[
    L^n (f) = \frac{1}{n} \sum _{i=1} ^n f(X_i).
\]
If the Markov chain $X$ has an invariant distribution $\pi \in \calP (S)$ and is ergodic, we have $L^n (f) \to \pi (f)$, a.s.\ as $n \to \infty$. Thus, there is a direct link between the convergence properties of the empirical measure $L^n$ and the performance of Monte Carlo methods based on time averages for approximating observables. 

Classical methods for studying performance of MCMC methods are often mixing properties or asymptotic variance, which are not directly linked to the empirical measure $L^n$ of the underlying Markov chain. The theory of large deviations on the other hand, is concerned precisely with deviations of $L^n$ from $\pi$ as the number of steps $n$ grows. It therefore serves as a useful complement to the more traditional methods for analysing performance of a given MCMC method, as well as for designing new algorithms.

At the heart of the theory of large deviations is the \textit{large deviation principle} (LDP): the sequence $\{ L^n \}$ is said to satisfy an LDP with speed $n$ and \textit{rate function} $I : S \to [0, \infty]$, if $I$ is lower semi-continuous, has compact sub-level sets and for any measurable $A \subset \calP (S)$,
\begin{align*}
    - \inf _{\nu \in A ^\circ} I(\nu) &\leq \liminf _{n \to \infty} \frac{1}{n} \log \bP (L ^n \in A ^\circ) \\
    &\leq \limsup _{n \to \infty} \frac{1}{n} \log \bP (L ^n \in \bar A ) \leq - \inf _{\nu \in \bar A} I (\nu).    
\end{align*}
The gist of these inequalities is that, if $\{ L^n \}$ satisfies an LDP with speed $n$ and rate function $I$, then for any $\nu \in \calP (S)$ and $n$ large, \[
    \bP (L ^n \approx \nu) \simeq \exp\{-n I(\nu)\}.
\]
The definition of an LDP makes this statement rigorous in the limit $n \to \infty$. 

For any metric space, an equivalent formulation of the LDP is the \textit{Laplace principle} (see e.g., Theorems 1.5 and 1.8 in \cite{BudhirajaAmarjit2019AaAo}). In the setting of the empirical measures $\{ L^n \}$, we have that this sequence satisfies a Laplace principle, with speed $n$ and rate function $I$ (same as in the LDP), if for any $F \in C_b \left(\calP (S) \right)$,
\begin{align}
\label{eq:LaplcePrinc}
    \lim _{n \to \infty} \frac{1}{n} \log \bE \left[ e^{-n F(L^n)} \right] = - \inf _{\nu \in \calP (S)} \left\{ F(\nu) + I(\nu) \right\}.
\end{align}

The starting point for large deviations of empirical measures of Markov processes is the pioneering work of Donsker and Varadhan \cite{DV75a, DV76}. A central assumption in those works is that the transition probability function $p$ has a density with respect to some reference measure. This is a reasonable transitivity assumption for processes that involve something that, in some sense, resembles a diffusive term. However, in \cite{DupuisLiu2015} the authors show that it is a rather restrictive condition and as an example construct a simple continuous-time pure-jump process for which it does not hold. The following alternative condition on $p$ was used in \cite{DupuisLiu2015} to establish an LDP for the empirical measures of a Markov process.
\begin{condition}[Condition 6.3 in \cite{BudhirajaAmarjit2019AaAo}]
\label{cond:DE}
The transition kernel $p$ of the Markov chain $X$ is such that there exist positive integers $l_0$ and $n_0$, such that for all $x$ and $\zeta$ in $S$,
\begin{equation}
    \label{eq:transCond}
    \sum_{i\ge l_0}2^{-i}p^{(i)}(x,dy)\ll \sum_{j\ge n_0}2^{-j}p^{(j)}(\zeta,dy),
\end{equation}
where $p^{(k)}$ denotes the $k-$step transition probability.
\end{condition}
This condition is general enough to cover a large class of Markov processes, both in discrete and continuous time; see e.g., \cite{BudhirajaAmarjit2019AaAo, DupuisLiu2015} and the references therein. However, it does not cover the case when $X$ comes from a Metropolis-Hastings scheme, as we show with a simple counterexample in Section \ref{sec:LDPMH}. Condition \ref{cond:DE}, or variations of it, is a key ingredient in existing work on large deviations for Markov chains. Because it is not satisfied for Metropolis-Hastings, in order to use large deviations to analyse the performance of such algorithms, and with an outlook towards more advanced MCMC methods that build on the Metropolis-Hastings algorithm—e.g.,\  MALA and ABC-MCMC—we must first establish the relevant LDP. This is the main contribution of this paper.

\subsection{Metropolis-Hastings algorithm}
\label{sec:MH}
We now give a brief description of the Metropolis-Hastings (MH) algorithm for constructing a Markov chain $X = \{ X_i \}_{i \geq 0}$ with the target measure $\pi$ as invariant distribution. For simplicity we restrict ourselves to the setting where $S \subseteq \bR ^d$ and $\pi$ is equivalent to Lebesgue measure. More abstract settings are possible as well, see for example \cite{Tierney98}. However this would require different assumptions and modifications of the proof of the large deviation principle in Section \ref{sec:LDPEmpMeas}. 

The main ingredient of the MH algorithm is the \textit{proposal distribution} $J(\cdot|x)\in\mathcal{P}(S)$, defined for all $x\in S$. If the chain after $n$ steps is in some state $X_n = x_n$, a proposal $Y_{n+1}$ for the next state $X_{n+1}$ is generated from $J(\cdot | x_n)$. This is followed by an acceptance-rejection step, which is defined in terms of the \textit{Hastings ratio}, 
\begin{equation*}
    \varpi(x,y)=\min\left\{1,\frac{\pi(y)J(x|y)}{\pi(x)J(y|x)}\right\};
\end{equation*}
where if $\pi (x) J(y|x) = 0$, we set $\varpi (x,y) = 1$. The proposed move from $X_n = x_n$ to $X_{n+1} = Y_{n+1}$ is accepted with probability $\varpi (x_n, Y_{n+1})$, and rejected with probability $1 - \varpi(x_n, Y_{n+1})$. In the latter case, we set $X_{n+1} = x_n$. The pseudocode for the update step in the MH algorithm is presented in Algorithm~\ref{alg:MH}.
 \begin{algorithm}
\caption{Metropolis-Hastings algorithm}
\label{alg:MH}
\begin{algorithmic}[1]
\REQUIRE Given $X_i=x_i$,
 \STATE Generate a proposal $Y_{i+1}\sim J(\cdot|x_i)$
 \STATE Set
    \begin{equation*}
        X_{i+1}=\begin{cases}
            Y_{i+1}\quad \text{with probability }\varpi(x_i,Y_{i+1})\\
            x_i\quad \text{with probability }1-\varpi(x_i,Y_{i+1})
        \end{cases}
    \end{equation*}
\end{algorithmic}
\end{algorithm}

Define a transition kernel $a(x, dy)$ and a function $r : S \to [0,1]$ by
\begin{equation}
    \label{eq:acceptancePart}
    a(x,dy) = \min\left\{1,\frac{\pi(y)J(x|y)}{\pi(x)J(y|x)}\right\}J(dy|x),
\end{equation}
and
\begin{equation}
\label{eq:rejectionPart}
    r(x) =1-a(x,S)=1-\int_S a(x,dy).
\end{equation}
The kernel $a$ corresponds to the acceptance-part of the MH algorithm, i.e., it corresponds to transitions to proposed states that are accepted in the MH algorithm. Similarly, $r$ corresponds to the rejection part: it represents the probability of rejecting a proposed state, and thus remaining at the current state of the chain. With these definitions, the dynamics of the MH algorithm corresponds to generating a Markov chain $\{ X_i \} _{i \geq 0}$, the MH chain, with transition kernel
\begin{equation}
    \label{eq:MHtransitionKernel}
    K(x,dy) = a(x,dy)+r(x)\delta_x(dy).
\end{equation}
For a more in-depth look at the MH algorithm and its various properties, see for example \cite{RobertChristianP2004MCSM} and the references therein. A key observation is that due to the form of the Hastings ratio, and the corresponding kernel $K$, under reasonable assumptions on the proposal distribution $J$, the MH chain $\{ X _i \} _{i \geq 0}$ generated according to the above has $\pi$ as its unique invariant measure.

\section{Assumptions}
\label{sec:ass}
In this section, we state the assumptions we make on the MH chain defined in Section \ref{sec:MH}. Rather than aiming to make them as general as possible, we have aimed for assumptions, primarily on the proposal distribution $J$, that are tangible from the perspective of MCMC methods. One alternative, commonly used when studying this type of Markov chain, is to assume the existence of some Lyapunov function \cite{MengersenTweedie, RT96b, KM03, KM05}. Although this ensures the convergence of the empirical measures, however for the large deviation results additional assumptions are still needed; see e.g., the Donsker-Varadhan-like assumption on the transition kernel in \cite{KM05}.    

As mentioned in Section~\ref{sec:MH}, we make the assumption that $S \subseteq \bR ^d$, for some $d\geq 1$. We make a slight abuse of notation, in that we let $\pi(\cdot)$, $J(\cdot|x)$, and $a(x,\cdot)$ denote both the corresponding measures and probability density functions. 
In order to establish the LDP, we make the following additional assumptions.

\begin{enumerate}[label = (A.\arabic*)]
\item{\label{ass:targetAbsContLambda}
    The target probability measure $\pi$ is equivalent to $\lambda$ on $S$ (i.e., $\pi\ll\lambda$ and $\lambda\ll\pi$). The probability density $\pi(x)$ is a continuous function.}
\item{\label{ass:proposalDistributionAbsCont}
    The proposal distribution $J(\cdot|x)$ is absolutely continuous with respect to the target measure $\pi$ (i.e., $J(\cdot|x)\ll\pi$), for all $x\in S$. The probability density  $J(y|x)$ is a continuous and bounded function of $x$ and $y$, and it satisfies
    \begin{equation}
    \label{eq:Jpositive}
        J(y|x)>0,\quad \forall (x,y)\in S^2.
    \end{equation}}

\item{\label{ass:compactSpaceOrLyapunov}
    There exists a Lyapunov function $U:S\to[0,\infty]$ such that the following properties hold:
    \begin{enumerate}[label=(\alph*)]
        \item $\inf_{x\in S}\left[U(x)-\log\int_Se^{U(y)}K(x,dy)\right]>-\infty$
        \item For each $M<\infty$, the set
        \begin{equation*}
            \left\{x\in S \,:\,U(x)-\log\int_Se^{U(y)}K(x,dy)\le M\right\}
        \end{equation*}
        is a relatively compact subset of $S$.
        \item For every compact set $K\subset S$ there exists $C_K<\infty$ such that 
        \begin{equation*}
            \sup_{x\in K}U(x)\le C_K.
        \end{equation*}
    \end{enumerate}}
\end{enumerate}

Because $\pi$ and $\lambda$ are equivalent measures, the support of $\pi$ is all of $S$. However, it is not necessarily the case that $\pi(x) > 0$ for all $x\in S$, as there may exist a (nonempty) set $E \subset S$, such that $\lambda (E) = 0$ and $\pi(x)=0$ for $x \in E$. Therefore, define the set $S_+$ as
\begin{equation}
    \label{eq:Spositive}
    S_+ = \{y\in S\,:\, \pi(y)>0\}.
\end{equation}
Observe that $S_+$ is an open subset of $S$, being the density function $\pi(x)$ continuous.

Assumptions \ref{ass:targetAbsContLambda}-Assumption~\ref{ass:proposalDistributionAbsCont} are used to show that the MH transition kernel $K$, and thus the MH chain $\{ X_i \}_{i \geq 0}$, has certain properties. Assumption \ref{ass:compactSpaceOrLyapunov} replaces a compactness-assumption on $S$ for proving the LDP. In the case of a compact state space $S$, this assumption is not needed.

\begin{remark}
\label{rmk:continuityAandR}
We start by showing that the combination of \ref{ass:targetAbsContLambda} and \ref{ass:proposalDistributionAbsCont} ensure continuity and boundedness of the components $a$ (acceptance part) and $r$ (rejection part) of the MH transition kernel $K$. To see this, note first that it is sufficient to define $K$ only for the states $x \in S_+$. If $x\in S_+$, the quantity 
\begin{equation}
    \min\left\{1,\frac{\pi(y)J(x|y)}{\pi(x)J(y|x)}\right\}
\end{equation}
is well defined, and therefore so is the MH transition kernel $K(x,\cdot)$. Moreover, if the initial point $X_0$ of the chain belongs to $S_+$, the MH algorithm only allows moves to states $y$ that preserve the property $\pi(y)>0$.

Consider now $x\in S_+$. Assumption~\ref{ass:targetAbsContLambda} and Assumption~\ref{ass:proposalDistributionAbsCont} imply that $J(\cdot|x)\ll\lambda$. Therefore, the acceptance part \eqref{eq:acceptancePart} of $K(x,\cdot)$ is absolutely continuous with respect to the Lebesgue measure (i.e., $a(x,\cdot)\ll\lambda$), and its density is given by
    \begin{equation}
        \label{eq:acceptanceDensity}
        a(x,y) = \min\left\{1,\frac{\pi(y)J(x|y)}{\pi(x)J(y|x)}\right\}J(y|x).
    \end{equation}   
    Since $\pi(x)$ is continuous for all $x\in S$ and $J(x|y)$ is continuous and bounded for all $(x,y)\in S^2$, we have $a(x,y) \in C_b(S_+ \times S)$. 

From the continuity of $a(x,y)$ on $S_+\times S$, we obtain that $r(x)=1-a(x,S)$ is also continuous for all $x\in S_+$. This continuity extends to all of $S$. First, if $x\notin S_+$, so that $\pi(x)=0$, then 
\begin{equation*}
    r(x)=1-a(x,S)=
    1-\int_SJ(y|x)dy=0,
\end{equation*}
since $\pi(y)>0$ for $\lambda$-almost all $y\in S$. Take $x\notin S_+$ and a sequence $\{x_n\}\subset S$ that converges to $x$. From the continuity of the target density function $\pi$, $\pi(x_n)\to\pi(x)=0$. Moreover, for a fixed $y$ such that $\pi(y)>0$, we have $a(x_n,y)\to J(y|x)$ as $n \to \infty$. To see this, note that since $\pi$ and $\lambda$ are equivalent, $\pi(y)>0$ for $\lambda-$almost all $y\in S$. It follows that $\lim_{n\to\infty}a(x_n,y)=J(y|x)$ for $\lambda-$almost all $y\in S$. Recalling that $J$ is bounded, by dominated convergence we have
\begin{equation*}
    \lim_{n\to\infty}a(x_n,S)= \lim_{n\to\infty}\int_Sa(x_n,y)dy=\int_S\lim_{n\to\infty}a(x_n,y)dy=\int_S J(y|x)dy=1.
\end{equation*}
This in turn implies that 
\[ 
\lim_{n\to\infty}r(x_n)=1-\lim_{n\to\infty}a(x_n,S)=0.
\] 
Since $r(x) = 0$, this shows that $r$ is continuous on $S$. 
\end{remark}

\begin{remark}
\label{rmk:KhasAlawaysAcceptancePart}
Next, we show that \ref{ass:targetAbsContLambda}-\ref{ass:proposalDistributionAbsCont} ensure that $K$ has the target measure $\pi$ as its unique invariant distribution, and the MH chain $\{ X_i \}_{i \geq 0}$ is ergodic.

Let $x\in S_+$ as defined in \eqref{eq:Spositive}.
    Since $\lambda\ll\pi$ by \ref{ass:targetAbsContLambda}, $\pi(y)>0$ for $\lambda-$almost every $y\in S$. Moreover, by Assumption~\ref{ass:proposalDistributionAbsCont}, $J(x|y)>0$ for all $(x,y)\in S^2$. It follows that $a(x,y)>0$ for $\lambda-$a.e. $y\in S$. This in turn implies that $\lambda\ll a(x,\cdot)$, and $\lambda$ and $a(x,\cdot)$ are equivalent measures for all $x\in S_+$. By transitivity, $a(x,\cdot)$ and $a(y,\cdot)$ are equivalent for all $x,y,\in S_+$. We now show that from this it follows that the MH transition kernel $K$ is indecomposable, i.e. there are no disjoint Borel sets $A_1,A_2\in\mathcal{B}(S)$ such that
    \begin{equation*}
        K(x,A_1)=1\;\forall x\in A_1\quad\text{and}\quad K(y,A_2)=1\;\forall y\in A_2.
    \end{equation*}
    We argue by contradiction. Assume that two such sets exist. Then, 
    \begin{equation}
    \label{eq:indecomposibilityArgument}
        1=K(x,A_1)=a(x,A_1)+r(x)\delta_x(A_1).
    \end{equation}
    Since $\lambda\ll a(x,\cdot)$, we have $a(x,S)>0$, and thus $r(x)=1-a(x,S)<1$ for all $x \in s$. Combined with \eqref{eq:indecomposibilityArgument}, this shows $a(x,A_1)>0$. It follows from $a(x,\cdot)$ and $a(y,\cdot)$ being equivalent measures that $a(y,A_1)>0$, which contradicts the assumption. Hence, $K(x,dy)$ is indecomposable. By Theorem 7.16 in~\cite{Breiman}, $\pi$ is the unique invariant distribution for the MH transition kernel $K(x,dy)$ and the Markov chain associated with $\pi$ and $K(x,dy)$ is ergodic.
    
\end{remark}

\begin{remark}
    For the case $S=\mathbb{R}^d$, Section~8.2 in \cite{DupuisEllis} describes a class of models for which a Lyapunov function $U$ that satisfies \ref{ass:compactSpaceOrLyapunov} exists. Here we present their example adapted to the MH kernel $K$. For specific choices of $J$ and/or $\pi$, this assumption can be made more explicit (or verified). 
    
    Let $b:\mathbb{R}^d\to\mathbb{R}^d$ be measurable. Denote by $\langle\cdot,\cdot\rangle$ the scalar product in $\mathbb{R}^d$ and for $\alpha\in\mathbb{R}^d$ define
    \begin{equation*}
        H_b(x,\alpha)=\log\left[\int_{\mathbb{R}^d}e^{\langle\alpha,y-x-b(x)\rangle}a(x,dy)+r(x)e^{-\langle\alpha,b(x)\rangle}\right].
    \end{equation*}
    Consider the following assumptions.
    \begin{enumerate}[label=(\alph*)]
        \item\label{Lyap1} $b$ is bounded on all compact sets in $\mathbb{R}^d$
        \item\label{Lyap2} there exists $r>0$ such that 
        \begin{equation*}
            \sup_{x\in\mathbb{R}^d}H_b(x,\alpha)<\infty,
        \end{equation*}
        for all $\alpha\in\mathbb{R}^d$ that satisfy $\|\alpha\|\le r$
        \item\label{Lyap3} there exists a Lipschitz continuous function $U:\mathbb{R}^d\to[0,\infty)$ for which
        \begin{equation*}
            \lim_{\|x\|\to\infty}\left[U(x+b(x))-U(x)\right]=-\infty.
        \end{equation*}
    \end{enumerate}
    If \ref{Lyap1}, \ref{Lyap2} and \ref{Lyap3} hold, then $U$ is a Lyapunov function as required by Assumption\ref{ass:compactSpaceOrLyapunov}.

    A natural choice for $b$ is 
    \begin{equation*}
        b(x)=\int_{\mathbb{R}^d}y\cdot a(x,dy)-(1-r(x))\cdot x,
    \end{equation*}
    and the corresponding $H_b$ is
    \begin{equation*}
        H_b(x,\alpha)=\log e^{-\langle\alpha,\int a(x,dy)+r(x)\cdot x\rangle}+\log\left[\int_{\mathbb{R}^d} e^{\langle\alpha,y\rangle}a(x,dy)+r(x)e^{\langle\alpha,x\rangle}\right].
    \end{equation*}
Note that if the space $S$ is compact, then Assumption~\ref{ass:compactSpaceOrLyapunov} is automatically satisfied (for example, take $U(x) \equiv 0$).
\end{remark}

\section{Large deviations for empirical measures of Metropolis-Hastings chains}
\label{sec:LDPMH}
We are now ready to state our main result, an LDP for the sequence $\{ L^n\}$ of empirical measures of the MH chain $\{ X_i \} _{i \geq 0}$ with invariant distribution $\pi$ (see Section \ref{sec:MH} for the definition).

\begin{theorem}
    \label{thm:LDP}
    Let $\{X_i\}_{i\ge 0}$ be the Metropolis-Hastings chain from Section \ref{sec:MH} and $K(x,dy)$ the associated transition kernel. Let $\{L^n\}_{n\ge 0}\subset\mathcal{P}(S)$ be the corresponding sequence of empirical measures, defined in \eqref{eq:empiricalMeasure}. Under Assumptions \ref{ass:targetAbsContLambda}-\ref{ass:compactSpaceOrLyapunov}, with $A(\mu)$ as in \eqref{eq:Amu}, $\{L^n\}_{n\ge 0}$ satisfies an LDP with speed $n$ and rate function
    \begin{equation}
        \label{eq:rateFunction}
        I(\mu)=\inf_{\gamma\in A(\mu)} R(\gamma \parallel \mu\otimes K).
    \end{equation}
\end{theorem}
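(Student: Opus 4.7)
The plan is to establish Theorem \ref{thm:LDP} via the equivalence between the LDP and the Laplace principle, following the weak convergence approach of \cite{DupuisEllis, BudhirajaAmarjit2019AaAo}. The starting point is the variational representation
\begin{equation*}
-\frac{1}{n}\log \bE\left[e^{-n F(L^n)}\right] = \inf \bE\left[\frac{1}{n}\sum_{i=0}^{n-1} R(\bar K^n_i(\bar X^n_i, \cdot) \parallel K(\bar X^n_i, \cdot)) + F(\bar L^n)\right],
\end{equation*}
where the infimum runs over sequences of controlled stochastic kernels $\{\bar K^n_i\}$ and $\{\bar X^n_i\}$ is the associated controlled chain. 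The task then reduces to matching the right-hand side, in the limit $n\to\infty$, with $\inf_{\mu\in\calP(S)}\{F(\mu)+I(\mu)\}$.

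For the Laplace upper bound (Section \ref{sec:upper}), I would work with the pair empirical measure $\bar L^n_2 = \tfrac{1}{n}\sum_{i=0}^{n-1}\delta_{(\bar X^n_i, \bar X^n_{i+1})}\in\calP(S^2)$ generated by a near-optimal control sequence. Using the chain rule \eqref{eq:chainRuleCorollary} and disintegrating, the accumulated cost dominates $R(\bar L^n_2\parallel [\bar L^n_2]_1\otimes K)$. Tightness of $\{\bar L^n_2\}$, secured by the Lyapunov condition \ref{ass:compactSpaceOrLyapunov}, lets one extract a subsequential limit $\gamma$; joint lower semicontinuity of $R(\cdot\parallel\cdot)$ and the Feller property of $K$ (from Remark \ref{rmk:continuityAandR}) then give the desired cost lower bound $R(\gamma\parallel [\gamma]_1\otimes K)$. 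The constraint $\gamma\in A([\gamma]_1)$ emerges because $[\bar L^n_2]_1$ and $[\bar L^n_2]_2$ differ by a telescoping $O(1/n)$ term, forcing the two marginals of any limit to coincide.

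For the Laplace lower bound (Section \ref{sec:lower}), given $\mu\in\calP(S)$ with $I(\mu)<\infty$, I would select $\gamma\in A(\mu)$ nearly attaining the infimum in \eqref{eq:rateFunction}, disintegrate $\gamma(dx,dy)=\mu(dx)\,\bar K^\gamma(x,dy)$, and run the chain with this time-homogeneous kernel. Using the ergodicity ensured by Remark \ref{rmk:KhasAlawaysAcceptancePart} (together with some mollification if $\bar K^\gamma$ is not Feller) gives $\bar L^n\Rightarrow\mu$ and the empirical cost converging to $R(\gamma\parallel \mu\otimes K)$, producing the matching upper bound $F(\mu)+R(\gamma\parallel\mu\otimes K)$.

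The main obstacle, and the reason Condition \ref{cond:DE} fails, is the singular structure
\[
K(x,dy) = a(x,dy) + r(x)\,\delta_x(dy),
\]
with $a(x,\cdot)$ absolutely continuous and $r(x)\delta_x$ purely atomic. Any $\bar K$ with $R(\bar K(x,\cdot)\parallel K(x,\cdot))<\infty$ must respect this decomposition—mass on the diagonal can come only from the rejection part and mass off the diagonal only from the acceptance part—so the rate function splits into acceptance and rejection contributions that must be analysed separately. Verifying that near-optimizers in the lower bound admit such a decomposition, and that the tightness/identification argument in the upper bound correctly handles the mass concentrated on the diagonal $\{(x,x):x\in S\}$, will be the technically delicate step. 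Assumption \ref{ass:proposalDistributionAbsCont}, which makes $a(x,\cdot)$ equivalent to $\lambda$ on $S_+$, is crucial to avoid pathological absolute-continuity issues here.
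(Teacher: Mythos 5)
Your upper-bound outline matches the paper's: tightness from Assumption \ref{ass:compactSpaceOrLyapunov}, joint lower semicontinuity of relative entropy, and the Feller property of $K$ (Lemma \ref{lem:MHKernelFeller}) are exactly the ingredients used, and that part goes through as in \cite{BudhirajaAmarjit2019AaAo}.

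The lower bound, however, has a genuine gap. You propose to take a near-optimal $\gamma\in A(\mu)$, disintegrate it as $\mu\otimes\bar K^\gamma$, and run the controlled chain with $\bar K^\gamma$, invoking ``ergodicity ensured by Remark \ref{rmk:KhasAlawaysAcceptancePart}.'' That remark establishes ergodicity of the \emph{MH kernel} $K$ with invariant measure $\pi$; it says nothing about the controlled kernel $\bar K^\gamma$ with invariant measure $\mu$. This is precisely where the failure of Condition \ref{cond:DE} bites: there exist $\mu$ with $I(\mu)<\infty$ that are \emph{not} absolutely continuous with respect to $\pi$ (e.g.\ $\mu=\delta_{x_0}$ with $r(x_0)>0$), and for any $\mu$ with a nontrivial singular part the optimal kernel acts as $\delta_x$ on the singular support (Lemma \ref{lem:deltaIsKernelForSingularMeasure}), so that both $\mu_\lambda$ and $\mu_s$ are separately invariant (Lemma \ref{lem:nuLambdaAndSingularInvariantForQ}) and the controlled chain is not ergodic. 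Consequently $\bar L^n$ need not converge to $\mu$; moreover the Laplace lower bound must hold for an arbitrary deterministic starting point $x\in S$, which may lie outside the support of $\mu$ entirely. The issue is not Feller-ness of $\bar K^\gamma$, so ``some mollification'' is not the right fix. The paper resolves this by first replacing $\mu$ with a measure $\nu^*$ equivalent to $\pi$ satisfying $I(\nu^*)\le I(\mu)+\varepsilon$ and $d_{LP}(\nu^*,\mu)<\delta$: the singular part is approximated by uniform measures on small balls around i.i.d.\ samples from $\mu_s$, with radii chosen via the modulus $\Delta_\varepsilon$ in \eqref{def:deltaEpsilon} so that the rate function is controlled (Lemma \ref{lem:construction}), and a convex combination with $\pi$ restores equivalence and ergodicity of the optimal kernel (Lemma \ref{lem:existenceOfNu*}); the extension to all starting points then uses $K(x,\Phi)>0$ in place of Condition \ref{cond:DE}. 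Your proposal correctly diagnoses the singular structure of $K$ as the source of difficulty but does not supply this approximation step, without which the proposed lower-bound argument fails for measures with a singular component.
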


As mentioned in Section \ref{sec:notation}, we consider $\calP (S)$ as a metric space (equipped with, e.g., the L\'evy-Prohorov metric). Therefore, the LDP is equivalent to the Laplace principle, and we will use the latter to prove Theorem \ref{thm:LDP}. More specifically, the proof is split up into proving the \textit{Laplace principle upper bound},
\begin{align}
\label{eq:upper}
    \liminf _{n \to \infty} - \frac{1}{n} \bE _{x_n} \left[ e^{-n F (L^n)} \right] \geq \inf _{\mu \in \calP (S)} \left\{ F(\mu) + I(\mu) \right\},
\end{align}
and the \textit{Laplace principle lower bound},
\begin{align}
\label{eq:lower}
    \limsup _{n \to \infty} - \frac{1}{n} \bE _{x} \left[ e^{-n F (L^n)} \right] \leq \inf _{\mu \in \calP (S)} \left\{ F(\mu) + I(\mu) \right\},
\end{align}
for every $F \in C_b (\calP (S))$, every sequence $\{x_n\}\subset S$ and $x\in S$. 
The respective proofs are given in Sections \ref{sec:upper} and \ref{sec:lower}. The starting point for both bounds is the following representation formula (Proposition 6.1 in \cite{BudhirajaAmarjit2019AaAo}): for every bounded, measurable $F : \calP (S) \to \bR$,
\begin{align}
\label{eq:rep}
    -\frac{1}{n} \log \bE \left[ e^{-nF(L^n)} \right] &= \inf _{\{ \bar \mu ^n _i \} } \bE \left[ F(\bar L ^n) + \frac{1}{n} \sum_{i=1} ^n R(\bar \mu ^n _i \parallel K(\bar X ^n _i, \cdot)) \right],
\end{align}
where $\bar L^n$ is the controlled empirical measure, $\bar L ^n = \frac{1}{n} \sum _{i=0} ^{n-1} \delta _{\bar X ^n}$, and the conditional distribution of $\bar X^n _i$ given $\sigma(\bar X^n_1,\dots,\bar X^n_{i-1})$ is $\bar \mu ^n _i$. The infimum is over all such controls, i.e., random probability measures, $\bar \mu ^n _i $, such that $\bar \mu ^n _i$ is measurable with respect to $\calF _{i-1} ^n = \sigma \left( \bar X ^n _1, \dots, \bar X^n _{i-1} \right)$, with $\calF ^n _0 = \{\emptyset, \Omega\}$; see \cite{DupuisEllis, BudhirajaAmarjit2019AaAo} for more details.

For the upper bound, under Assumptions \ref{ass:targetAbsContLambda}-\ref{ass:compactSpaceOrLyapunov}, the proof of Proposition 6.13 in \cite{BudhirajaAmarjit2019AaAo}, with the additional arguments in Section 6.10 therein to account for a non-compact state space, can be applied in our setting as well. The only thing that needs to be verified is the Feller property of the MH transition kernel $K$ (see Lemma \ref{lem:MHKernelFeller}).  

The work for proving Theorem \ref{thm:LDP} lies in proving the lower bound \eqref{eq:lower}. Existing results rely on some variation of Condition \ref{cond:DE}. However, such a condition is not applicable in our setting, as the following simple example shows: Take an $x \in S$ such that $r(x) > 0$ (i.e., when in $x$, there is a positive probability of rejecting a proposed move and stay in $x$) and consider the Borel set $A=\{x\}\in \mathcal{B}(S)$. If $x\neq \zeta$, then $p^{(j)}(\zeta,A)=p^{(j)}(\zeta,x) = 0,\forall j\ge 0$. However, $p^{(i)}(x,x)>0, \forall i\ge 0$, since $r(x)>0$ . This shows that \eqref{eq:transCond} does not hold for all $x\in S$, and Condition \ref{cond:DE} does not hold for the MH kernel $K$, nor for kernels of similar type, such as those arising in ABC-MCMC or MALA.

In Section \ref{sec:lower} we show how the Laplace principle lower bound can be shown for the MH chain without relying on a transitivity assumption like Condition \ref{cond:DE}. The main point is that due to the specific structure of the MH kernel, under Assumptions \ref{ass:targetAbsContLambda}-\ref{ass:proposalDistributionAbsCont} the chain retains the properties that are important for proving the LDP (and typically guaranteed by something like Condition \ref{cond:DE} combined with other assumptions). 

The main difficult in the proof arises from the fact that, contrary to the setting in \cite{BudhirajaAmarjit2019AaAo}, for $\nu \in \calP (S)$, $I(\nu) < \infty  $ does not imply that $\nu \ll \pi$. In \cite{BudhirajaAmarjit2019AaAo} this implication is used in defining near-optimal controls in the representation \eqref{eq:rep}, which in turn can be used to prove the lower bound. 

Before proceeding with the proofs of the upper and lower bounds, in the following section we give some different characterizations and properties of the rate function $I$ in \eqref{eq:rateFunction}.

\subsection{Characterization and properties of the rate function}
We first express the rate function \eqref{eq:rateFunction} in a more convenient form. By Lemma~6.8(a) in~\cite{BudhirajaAmarjit2019AaAo}, the probability measures in the set $A(\nu)$ are of the form 
\begin{equation*}
    \gamma(dx\,\times\,dy)=\nu(dx)\,q(x,dy),
\end{equation*} for a transition kernel $q(x,dy)$ such that $\nu$ is invariant for $q$. Therefore, using \eqref{eq:chainRuleCorollary}, the chain rule for relative entropy, we can rewrite \eqref{eq:rateFunction} as
\begin{equation}
\label{eq:rateFuncAsInf}
I(\nu)=\inf_{q\in\mathcal{Q}}\int_SR(q(x,\cdot)\parallel K(x,\cdot))\nu(dx),
\end{equation}
where $\mathcal{Q}$ denotes the set of all the transition kernels $q(x,dy)$ on $S$ such that $\nu$ is an invariant distribution for $q$. Lemma~6.8(b) in \cite{BudhirajaAmarjit2019AaAo} guarantees the existence of a minimizing $q$ in the definition of $I(\nu)$, under the assumption $I(\nu)<\infty$. That is, there exists a transition kernel $q$ with stationary distribution $\nu$ such that
\begin{equation}
    \label{eq:rateFunctionIntegral}
    I(\nu)=\int_SR(q(x,\cdot)\parallel K(x,\cdot))\nu(dx).
\end{equation}

The representation \eqref{eq:rateFunctionIntegral} of the rate function allows us to characterize the minimizers $q$, based on the form of the MH transition kernel $K$ \eqref{eq:MHtransitionKernel}, as the following result shows.

\begin{lemma}
    \label{lem:transitionKernelForm}
    If $I(\nu)<\infty$, then the transition kernel $q(x,dy)$ in \eqref{eq:rateFunctionIntegral} is $q(x,\cdot)\ll K(x,\cdot)$ $\nu$-a.s. In particular, it is of the form
    \begin{equation}
        \label{eq:transitionKernelAlpha+rho}
        q(x,\cdot)=\alpha(x,\cdot)+\rho(x)\delta_x(\cdot),\quad \nu\text{-a.s.},
    \end{equation}
    with $\alpha(x,\cdot)\ll a(x,\cdot)$ $\nu$-a.s. and $\rho(x)$ is a measurable function.
\end{lemma}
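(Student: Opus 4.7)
The plan is to exploit (i) the finiteness of $I(\nu)$ to extract pointwise absolute continuity of $q(x,\cdot)$ with respect to $K(x,\cdot)$, and then (ii) the explicit decomposition of the MH kernel $K(x,dy)=a(x,dy)+r(x)\delta_x(dy)$ to split $q(x,\cdot)$ into an $a$-absolutely continuous piece and a rejection-type piece concentrated at $x$.

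First I would use \eqref{eq:rateFunctionIntegral}. Since $I(\nu)<\infty$ and the integrand $R(q(x,\cdot)\parallel K(x,\cdot))$ is nonnegative, it must be finite for $\nu$-a.e.\ $x\in S$. By the very definition of relative entropy, finiteness forces $q(x,\cdot)\ll K(x,\cdot)$ for $\nu$-a.e.\ $x$. This gives the first claim of the lemma.

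Next, on the full-$\nu$-measure set where this absolute continuity holds, let $h(x,y)=dq(x,\cdot)/dK(x,\cdot)$ be the Radon-Nikodym derivative. Invoking the standard existence theorem for jointly measurable Radon-Nikodym derivatives of stochastic kernels on Polish spaces, $h$ can be chosen to be $\mathcal{B}(S)\otimes\mathcal{B}(S)$-measurable. Using the structure of $K$, we then write
\begin{equation*}
q(x,dy)=h(x,y)\,a(x,dy)+r(x)h(x,x)\,\delta_x(dy),
\end{equation*}
and set $\alpha(x,dy):=h(x,y)\,a(x,dy)$ and $\rho(x):=r(x)h(x,x)$. By construction $\alpha(x,\cdot)\ll a(x,\cdot)$ $\nu$-a.s., and $\rho$ is measurable, since $r$ is continuous by Remark \ref{rmk:continuityAandR} and $x\mapsto h(x,x)$ is measurable as the composition of the jointly measurable map $h$ with the measurable diagonal embedding. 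This yields the representation \eqref{eq:transitionKernelAlpha+rho}.

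The main technical point I expect is ensuring the joint measurability of $h$, so that $\alpha$ is a bona fide stochastic kernel; in the Polish-space setting this follows from a standard disintegration argument (or a pointwise-limit construction on a countable generating algebra of $\mathcal{B}(S)$). Two edge cases are then easy to check and worth noting for completeness: when $r(x)=0$, $K(x,\cdot)=a(x,\cdot)$ and the decomposition reduces to $q(x,\cdot)=\alpha(x,\cdot)$ with $\rho(x)=0$; when $r(x)=1$, $K(x,\cdot)=\delta_x$, so $q(x,\cdot)\ll K(x,\cdot)$ forces $q(x,\cdot)=\delta_x$, giving $\alpha(x,\cdot)=0$ and $\rho(x)=1$, consistently with the formula above.
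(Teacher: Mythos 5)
Your proposal is correct and follows essentially the same route as the paper: finiteness of $I(\nu)$ forces $R(q(x,\cdot)\parallel K(x,\cdot))<\infty$ $\nu$-a.s., hence $q(x,\cdot)\ll K(x,\cdot)$, and the mixture structure of $K$ then yields the decomposition into an $a$-absolutely-continuous part and an atom at $x$. Your treatment is somewhat more explicit than the paper's (which simply asserts the form of $q$ and notes measurability of $\rho$), in that you construct $\alpha$ and $\rho$ from a jointly measurable Radon--Nikodym derivative and check the edge cases $r(x)=0$ and $r(x)=1$; this is a welcome refinement, not a different argument.
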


\begin{proof}
    If $I(\nu)<\infty$, then \eqref{eq:rateFunctionIntegral} implies $R(q(x,\cdot)\parallel K(x,\cdot))<\infty$ $\nu-$a.s. By the definition of relative entropy, this means that $q(x,\cdot)\ll K(x,\cdot)$ $\nu-$a.s. Recall that
    \begin{equation*}
        K(x,dy)=a(x,y)dy+r(x)\delta_x(dy),
    \end{equation*}
    i.e. $K(x,\cdot)$ is a mixture of a transition kernel $a(x,\cdot)\ll\lambda$, and a point mass in $x$. Therefore, for the transition kernel $q(x,\cdot)$ to be $q(x,\cdot)\ll K(x,\cdot)$ $\nu-$a.s., it must be of the form
    \begin{equation*}
    q(x,y)=\alpha(x,y)dy+\rho(x)\delta_x(dy),
    \end{equation*}
    where $\alpha(x,\cdot)\ll a(x,\cdot) \ll\lambda$, and $\rho(x)=0$ if $r(x)=0$. In particular, $\rho(x)$ must be a measurable function in order to make $x\mapsto q(x,A)$ a measurable function for every $A\in\mathcal{B}(S)$, and therefore $q$ a stochastic kernel.
\end{proof}

With the characterization of $q$ from Lemma~\ref{lem:transitionKernelForm}, we can write the rate function \eqref{eq:rateFunctionIntegral} in a more explicit way.
\begin{proposition}
\label{prop:decomositionOfRateFunctionAlphaRho}
    If $I(\nu)<\infty$, then the rate function can be expressed as
    \begin{equation}
        \label{eq:rateFuncDecomposed}
        I(\nu)=\int_S \int_S\log\left(\frac{\alpha(x,y)}{a(x,y)}\right)\alpha(x,y)\,dy\,\nu(dx)+\int_S\log\left(\frac{\rho(x)}{r(x)}\right)\rho(x)\,\nu(dx),
    \end{equation}
    with $\alpha(x,y)$ and $\rho(x)$ as in Lemma~\ref{lem:transitionKernelForm}.
\end{proposition}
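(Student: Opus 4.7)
The plan is to start from the representation \eqref{eq:rateFunctionIntegral} of $I(\nu)$ and compute the inner relative entropy $R(q(x,\cdot)\parallel K(x,\cdot))$ pointwise in $x$, then integrate. By Lemma~\ref{lem:transitionKernelForm}, both $K(x,\cdot)$ and $q(x,\cdot)$ admit a Lebesgue-type decomposition with the same singular structure: an absolutely continuous part with respect to $\lambda$, plus a point mass at $x$. Since $\lambda(\{x\})=0$, the absolutely continuous and atomic components are mutually singular, so the Radon-Nikodym derivative of $q(x,\cdot)$ with respect to $K(x,\cdot)$ can be computed on each piece separately.

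Concretely, I would write $q(x,dy)=\alpha(x,y)\,dy+\rho(x)\delta_x(dy)$ and $K(x,dy)=a(x,y)\,dy+r(x)\delta_x(dy)$, and observe that since $\alpha(x,\cdot)\ll a(x,\cdot)$ $\nu$-a.s.\ and $\rho(x)=0$ whenever $r(x)=0$ (again by Lemma~\ref{lem:transitionKernelForm}), the density is
\[
\frac{dq(x,\cdot)}{dK(x,\cdot)}(y)=\frac{\alpha(x,y)}{a(x,y)}\mathbbm{1}_{S\setminus\{x\}}(y)+\frac{\rho(x)}{r(x)}\mathbbm{1}_{\{x\}}(y),
\]
with the usual convention $0\log 0=0$ handling the zero cases. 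Plugging this into the definition of relative entropy and using that $\lambda(\{x\})=0$ to drop the singleton in the integral of the absolutely continuous part, I obtain
\[
R(q(x,\cdot)\parallel K(x,\cdot))=\int_S\log\!\left(\frac{\alpha(x,y)}{a(x,y)}\right)\alpha(x,y)\,dy+\log\!\left(\frac{\rho(x)}{r(x)}\right)\rho(x).
\]

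Finally, I would integrate this identity against $\nu(dx)$ and invoke Fubini (all integrands are $\nu$-a.s.\ well defined because $I(\nu)<\infty$ forces the integrand to be $\nu$-integrable) to conclude the representation in \eqref{eq:rateFuncDecomposed}. I do not expect any serious obstacle: the only subtle point is the bookkeeping of the exceptional sets where either $r(x)=0$ or $\rho(x)=0$, which is handled by the $\nu$-a.s.\ conclusions of Lemma~\ref{lem:transitionKernelForm} together with the convention $0\log 0=0$, and the verification that the mutually singular decomposition of $K(x,\cdot)$ is preserved by absolute continuity of $q(x,\cdot)$.
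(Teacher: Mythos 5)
Your proposal is correct and follows essentially the same route as the paper: both identify the Radon--Nikodym derivative of $q(x,\cdot)$ with respect to $K(x,\cdot)$ piecewise on the absolutely continuous part and the atom at $x$, substitute it into the definition of relative entropy, and integrate against $\nu$. The only cosmetic difference is that the paper verifies the density formula by checking $\int_A f_x(y)K(x,dy)=q(x,A)$ directly, whereas you justify it via the mutual singularity of the two components; both are fine.
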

\begin{proof}
    Applying the definition of relative entropy in \eqref{eq:rateFunctionIntegral}, the rate function becomes
    \begin{equation}
        \label{eq:rateFuncWithRadonNikodym}
        I(\nu)=\int_S\int_S\log\left(f_x(y)\right)q(x,dy)\nu(dx),
    \end{equation}
    where $f_x$ denotes the Radon-Nikodym derivative of the transition kernel $q(x,\cdot)$ with respect to $K(x,\cdot)$ for a fixed $x\in S$. By Lemma~\ref{lem:transitionKernelForm} $f_x$ exists $\nu$-a.s.\ and, by combining \eqref{eq:MHtransitionKernel} and \eqref{eq:transitionKernelAlpha+rho}, 
    \begin{equation}
        \label{eq:RNderivative}
        f_x(y)=\frac{\alpha(x,y)}{a(x,y)} I\{y\neq x\} + \frac{\rho(x)}{r(x)} I\{y=x\}.
    \end{equation}
    Indeed, let $A\in\mathcal{B}(S)$ and recall that $a(x,\cdot)\ll\lambda$. Then, it holds
    \begin{align*}
        \int_A f_x(y)K(x,dy)&=\int_A\left(\frac{\alpha(x,y)}{a(x,y)} I\{y\neq x\} + \frac{\rho(x)}{r(x)} I\{y=x\}\right)\left(a(x,y)dy+r(x)\delta_x(dy)\right)\\
        &=\int_A\alpha(x,y)dy+\rho(x)\delta_x(A)=q(x,A),
    \end{align*}
    for $\nu$-almost all $x\in S$. This proves that $f_x(y)$ in~\eqref{eq:RNderivative} is the Radon-Nikodym derivative of $q(x,\cdot)$ with respect to $K(x,\cdot)$ for $\nu$-almost all $x$ in $S$.
    
    Replacing $f_x(y)$ in \eqref{eq:rateFuncWithRadonNikodym} with \eqref{eq:RNderivative} gives
    \begin{align*}
        I(\nu)&=\int_S\int_S\log\left(\frac{\alpha(x,y)}{a(x,y)} I\{y\neq x\} + \frac{\rho(x)}{r(x)} I\{y=x\}\right)\left(\alpha(x,y)dy+\rho(x)\delta_x(dy)\right)\nu(dx)\\
        &=\int_S\left(\int_S\log\left(\frac{\alpha(x,y)}{a(x,y)}\right)\alpha(x,y)dy+\log\left(\frac{\rho(x)}{r(x)}\right)\rho(x)\right)\nu(dx),
    \end{align*}
    which leads to \eqref{eq:rateFuncDecomposed}.
\end{proof}
We end this section with an alternative characterization of the rate function, that highlights the fact that measures $\nu \in \calP (S)$ for which $I(\nu) < \infty$ need not be absolutely continuous with respect to $\pi$.

For any $\nu \in \calP (S)$, by the Lebesgue decomposition theorem, we have
    \begin{equation}
        \label{eq:decomposition}
        \nu = (1-p)\cdot\nu_\lambda+p\cdot\nu_s,
    \end{equation}
    where $p\in[0,1]$,  $\nu_\lambda,\nu_s\in\mathcal{P}(S)$, with $\nu_\lambda\ll\lambda$ and $\nu_s\perp\lambda$. Note that $p$ is specific to $\nu$, which we suppress in the notation. Associated with the decomposition \eqref{eq:decomposition}, we also define the partition $S=S_\lambda\cup S_s$, with $S_s \cap S_\lambda = \emptyset$, $\nu_s(S_\lambda)=0$ and $\lambda(S_s)=0$. The following Lemma shows that $I(\nu)$ is split into two parts, one corresponding to $\nu _\lambda$ and one corresponding to $\nu _s$.
\begin{lemma}
\label{lem:nuLambdaAndSingularInvariantForQ}
Let $\nu\in\mathcal{P}(S)$ with $I(\nu)<\infty$ and consider its decomposition as in \eqref{eq:decomposition}. Let $q(x,dy)$ be a transition kernel on $S$ with invariant distribution $\nu$, that satisfies 
\[I(\nu)=\int_SR(q(x,\cdot)\parallel K(x,\cdot))\nu(dx).\]
 Define $\mathcal{Q}_\lambda$ and $\mathcal{Q}_s$ as the set of transitions kernels that $\nu _\lambda$ and $\nu _s$ are invariant 
for, respectively. The following holds:
    \begin{enumerate}[label=(\alph*)]
        \item \label{itm:qInvariant} $q\in\mathcal{Q}_\lambda\cap\mathcal{Q}_s$, i.e. both $\nu_\lambda$ and $\nu_s$ are invariant for $q$,
        \item \label{itm:rateFcnDecomposition} the rate function satisfies 
        \begin{equation}
        \label{eq:decompositionOfRateFunction}
        I(\nu) = (1-p) I(\nu _\lambda) + p I(\nu _s).
        \end{equation}
    \end{enumerate}
\end{lemma}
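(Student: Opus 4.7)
The strategy is to exploit the shared mixed structure of the Metropolis-Hastings kernel $K(x,\cdot) = a(x,\cdot) + r(x)\delta_x$ and of the minimizing kernel $q(x,\cdot) = \alpha(x,\cdot) + \rho(x)\delta_x$ from Lemma~\ref{lem:transitionKernelForm}, combined with the uniqueness of the Lebesgue decomposition of any probability measure on $S$ with respect to $\lambda$.

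For part (a), I would substitute the representation of $q$ into the invariance equation $\nu(A) = \int_S q(x,A) \nu(dx)$ and expand the right-hand side using $\nu = (1-p)\nu_\lambda + p \nu_s$. Each resulting summand is readily classified with respect to $\lambda$: contributions of the form $\int_A \alpha(x,y)\,dy$ are absolutely continuous, $\rho$-integrals against $\nu_\lambda$ are absolutely continuous since $\nu_\lambda \ll \lambda$, and $\rho$-integrals against $\nu_s$ are singular since $\nu_s$ is concentrated on $S_s$ with $\lambda(S_s)=0$. Matching the absolutely continuous and singular parts of the two sides via uniqueness of the Lebesgue decomposition, the singular equation reduces to $\nu_s(A) = \int_A \rho(x)\,\nu_s(dx)$ for every $A \in \mathcal{B}(S)$, which forces $\rho = 1$ $\nu_s$-a.s. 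The normalization $\alpha(x,S)+\rho(x)=1$ (valid because $q(x,\cdot)$ is a probability measure) then yields $\alpha(x,\cdot) \equiv 0$ for $\nu_s$-a.e.\ $x$, from which invariance of $\nu_s$ under $q$ is immediate. Substituting this back into the absolutely continuous part of the decomposed identity gives $\nu_\lambda(A) = \int_S q(x,A)\,\nu_\lambda(dx)$, so $\nu_\lambda$ is invariant for $q$ as well.

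For part (b), the $\ge$ inequality follows by splitting $\int_S R(q(x,\cdot)\|K(x,\cdot))\,\nu(dx)$ according to $\nu = (1-p)\nu_\lambda + p\nu_s$ and applying part (a): each of the two pieces is then bounded below by $I(\nu_\lambda)$ or $I(\nu_s)$, respectively. For the $\le$ inequality, I would take minimizers $q_\lambda, q_s$ realizing $I(\nu_\lambda)$ and $I(\nu_s)$, whose existence is guaranteed by Lemma~6.8(b) in \cite{BudhirajaAmarjit2019AaAo}, since the $\ge$ direction together with $I(\nu) < \infty$ forces $I(\nu_\lambda), I(\nu_s) < \infty$ whenever $p \in (0,1)$ (the endpoints being trivial). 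Glue them via $q'(x,\cdot) = q_\lambda(x,\cdot) I\{x \in S_\lambda\} + q_s(x,\cdot) I\{x \in S_s\}$. Since $\nu_\lambda(S_s) = \nu_s(S_\lambda) = 0$, a direct computation shows that $\nu$ is invariant for $q'$, yielding $I(\nu) \le \int_S R(q'(x,\cdot)\|K(x,\cdot))\,\nu(dx) = (1-p)\,I(\nu_\lambda) + p\,I(\nu_s)$.

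I expect the main obstacle to be the bookkeeping in part (a): carefully tracking the absolutely continuous and singular components on both sides of the invariance equation, and then converting the singular identity, via the normalization $\alpha(x,S) + \rho(x) = 1$, into the structural conclusion $\alpha(x,\cdot) \equiv 0$ on the support of $\nu_s$. Once part (a) is in hand, part (b) reduces to fairly standard infimum manipulations with stochastic kernels.
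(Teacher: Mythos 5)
Your argument is correct and, for part (a) and the lower bound in part (b), essentially identical to the paper's: the paper also writes $q(x,\cdot)=\alpha(x,\cdot)+\rho(x)\delta_x(\cdot)$ via Lemma~\ref{lem:transitionKernelForm}, tests the invariance identity against the $\lambda$-null set $S_s$ to force $\rho=1$ (hence $q(x,\cdot)=\delta_x$) $\nu_s$-a.s., deduces invariance of $\nu_s$ and then of $\nu_\lambda$ by subtraction, and obtains $I(\nu)\ge(1-p)I(\nu_\lambda)+pI(\nu_s)$ by splitting the integral and using that $q\in\mathcal{Q}_\lambda\cap\mathcal{Q}_s$. The one place you diverge is the upper bound in (b): the paper simply invokes convexity of $I$ (Lemma~6.10(a) in \cite{BudhirajaAmarjit2019AaAo}), whereas you build the glued kernel $q'(x,\cdot)=q_\lambda(x,\cdot)I\{x\in S_\lambda\}+q_s(x,\cdot)I\{x\in S_s\}$ and verify $\nu$-invariance directly using $\nu_\lambda(S_s)=\nu_s(S_\lambda)=0$. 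Your construction is sound (and correctly defers the existence of the minimizers $q_\lambda,q_s$ until finiteness of $I(\nu_\lambda)$ and $I(\nu_s)$ has been extracted from the $\ge$ direction); it is more self-contained than the citation, at the cost of a little extra bookkeeping, and it exploits the mutual singularity of $\nu_\lambda$ and $\nu_s$, which the general convexity statement does not need.
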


\begin{proof}
    \ref{itm:qInvariant} By Lemma~\ref{lem:transitionKernelForm}, we can write
\[q(x,\cdot)=\alpha(x,\cdot)+\rho(x)\delta_x(\cdot),\quad \nu\text{-a.s.},\]
where $\alpha(x,\cdot)\ll\lambda$. By invariance of $\nu$ for $q$, for all $A\in\mathcal{B}(S)$,
    \[\nu(A)=\int_Sq(x,A)\nu(dx)=\int_S\alpha(x,A)\nu(dx)+\int_A\rho(x)\nu(dx).\]
    If we consider $A=S_s$, for which $\lambda(S_s)=0$, then $\alpha(x,S_s)=0$ for $\nu$-almost all $x\in S$ (because of $\alpha(x,\cdot)\ll\lambda$), and thus $\nu(S_s)=\int_{S_s}\rho(x)\nu(dx).$ On the other hand, $\nu(S_s)=\int_{S_s}\nu(dx)$. This implies that for all $x\in S_s$ $\nu$-a.s., we have that $\rho(x)=1$ a.s., and therefore $q(x,dy)=\delta_x(dy)$.

    With the form of $q$ on $S_s$ established, for $A\in\mathcal{B}(S)$, we have
        \begin{equation*}
        \int_Sq(x,A)\nu_s(dx)=\int_{S_s}q(x,A)\nu_s(dx)=\int_{S_s}\delta_x(A)\nu_s(dx)=\nu_s(A),
    \end{equation*}
    where the last equality is due to $q(x,\cdot)=\delta_x(\cdot)$ a.s.\ being the only $\nu_s$-invariant transition kernel (Lemma~\ref{lem:deltaIsKernelForSingularMeasure}). This proves that $\nu_s$ is invariant for $q$, which means that $q\in\mathcal{Q}_s$.

    We now show that $\nu_\lambda$ is also invariant for $q$. The decomposition \eqref{eq:decomposition} combined with the invariance of $\nu$ for $q$, and given that $q(x,\cdot)=\delta_x(dy), \nu_s$-a.s., gives, for $A\in\mathcal{B}(S)$,
    \begin{align*}
        (1-p)\cdot\nu_\lambda(A)+p\cdot\nu_s(A)&=\nu(A)=\int q(x,A)\nu(dx)\\
        &=(1-p)\cdot\int q(x,A)\nu_\lambda(dx)+p\cdot\int q(x,A)\nu_s(dx)\\
        &=(1-p)\cdot\int q(x,A)\nu_\lambda(dx)+p\cdot\nu_s(A).
    \end{align*}
    It follows that $\nu_\lambda(A)=\int q(x,A)\nu_\lambda(dx)$. Since $A\in\mathcal{B}(S)$ was chosen arbitrarily, $\nu_\lambda$ is invariant for $q$, i.e., $q\in\mathcal{Q}_\lambda$.
    
   To prove \ref{itm:rateFcnDecomposition}, by convexity of $I$ (see Lemma~6.10(a) in \cite{BudhirajaAmarjit2019AaAo}),

    \begin{equation}
    \label{eq:ConveityRateFunctionLe}
        I(\nu)=I\left((1-p)\cdot\nu_\lambda+p\cdot\nu_s\right)\le (1-p)\cdot I(\nu_\lambda)+p\cdot I(\nu_s).
    \end{equation}
    On the other hand, by the decomposition \eqref{eq:decomposition},
    \begin{align}
    I(\nu)&=\int_SR(q(x,\cdot)\parallel K(x,\cdot))\nu(dx) \label{eq:rateFuncGe}\\
        &= (1-p)\cdot \int_SR(q(x,\cdot)\parallel K(x,\cdot))\nu_\lambda(dx)+p\cdot \int_SR(q(x,\cdot)\parallel K(x,\cdot))\nu_s(dx)\nonumber.
    \end{align}
    From part (a), $q$ is an element of both $\mathcal{Q}_\lambda$ and $\mathcal{Q}_s$. Therefore,
    \begin{equation*}
        \int_SR(q(x,\cdot)\parallel K(x,\cdot))\nu_\lambda(dx)\ge \inf_{\tilde q\in \mathcal{Q}_\lambda} \int_SR(\tilde q(x,\cdot)\parallel K(x,\cdot))\nu_\lambda(dx).
    \end{equation*}
    The right-hand side of the previous display is precisely $I(\nu _\lambda)$. Similarly,
        \begin{equation*}
        \int_SR(q(x,\cdot)\parallel K(x,\cdot))\nu_s(dx)\ge \inf_{\tilde q\in \mathcal{Q}_s} \int_SR(\tilde q(x,\cdot)\parallel K(x,\cdot))\nu_s(dx),
    \end{equation*}
    and the right-hand side of this inequality is now $I(\nu _s)$. The two inequalities together with \eqref{eq:rateFuncGe} imply
    \begin{equation*}
        I(\nu)\ge (1-p)\cdot I (\nu_\lambda) + p\cdot I(\nu_s).
    \end{equation*}
    Combined with the opposite inequality \eqref{eq:ConveityRateFunctionLe}, this proves the desired equality \eqref{eq:decompositionOfRateFunction}.

\end{proof}

\section{Laplace principle upper bound}
\label{sec:upper}
In this section we prove the Laplace principle upper bound \eqref{eq:upper}.
\begin{proposition}
    \label{lemma:upper}
    Let $\{L^n\}_{n\ge 0}$ be the empirical measures defined in~\eqref{eq:empiricalMeasure} and $\{x_n\}_{n\ge 0}$ any sequence in $S$. Take $F \in C_b \left(\calP(S) \right)$ and define $I:\mathcal{P}(S)\to[0,\infty]$ as in~\eqref{eq:rateFunction}. Assume~\ref{ass:targetAbsContLambda}, \ref{ass:proposalDistributionAbsCont} and~\ref{ass:compactSpaceOrLyapunov}. Then, 
    \begin{equation*}
        \liminf_{n\to\infty}-\frac{1}{n}\log \mathbb{E}_{x_n}\left[ e^{-nF(L^n)} \right]\ge \inf_{\nu\in\mathcal{P}(S)}[F(\nu)+I(\nu)].
    \end{equation*}
\end{proposition}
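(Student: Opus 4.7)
The argument follows the weak convergence approach applied to the variational representation \eqref{eq:rep}, essentially replicating the proof of Proposition~6.13 in \cite{BudhirajaAmarjit2019AaAo}, together with the non-compact extensions in Section~6.10 therein, with the MH kernel $K$ now playing the role of the generic transition kernel. Fix $F \in C_b(\mathcal{P}(S))$ and $\varepsilon > 0$. For each $n$, select a family of controls $\{\bar\mu^n_i\}_{i=1}^n$ (with associated controlled process $\{\bar X^n_i\}$ started at $x_n$) that is $\varepsilon$-optimal in \eqref{eq:rep}, so that
\[
-\frac{1}{n}\log \mathbb{E}_{x_n}\bigl[e^{-n F(L^n)}\bigr] \;\geq\; \mathbb{E}\!\left[F(\bar L^n) + \frac{1}{n}\sum_{i=1}^n R(\bar\mu^n_i \parallel K(\bar X^n_i, \cdot))\right] - \varepsilon.
\]

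Introduce the pair occupation measure $Q^n \in \mathcal{P}(S \times \mathcal{P}(S))$ defined by $Q^n = \frac{1}{n}\sum_{i=0}^{n-1} \delta_{(\bar X^n_i,\,\bar\mu^n_{i+1})}$, whose first marginal equals $\bar L^n$. The key step is to show tightness of $\{Q^n\}$. This is exactly where Assumption~\ref{ass:compactSpaceOrLyapunov} is used: the Lyapunov inequality for $K$, combined with the bounded entropy cost of the $\varepsilon$-optimal controls, provides control of the tails of $\bar X^n_i$ (as in Section~6.10 of \cite{BudhirajaAmarjit2019AaAo}), yielding tightness of the first marginals; tightness in the second coordinate then follows by a standard argument since the expected relative entropy remains bounded.

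Along a convergent subsequence $Q^{n_k} \Rightarrow Q$, one disintegrates $Q(dx, d\theta) = \nu(dx)\,\Theta(x, d\theta)$ and defines the averaged kernel $q(x, \cdot) = \int \theta(\cdot)\,\Theta(x, d\theta)$. A standard argument exploiting that the total variation distance between $\bar L^n$ and $\frac{1}{n}\sum_{i=1}^n \bar\mu^n_i$ tends to zero shows that $\nu$ is invariant for $q$, so $\nu \otimes q \in A(\nu)$. The continuity of $F$, together with the Feller property of $K$ (Lemma~\ref{lem:MHKernelFeller}, whose proof rests on the continuity of $a(x,y)$ and $r(x)$ established in Remark~\ref{rmk:continuityAandR}), gives $F(\bar L^{n_k}) \to F(\nu)$ and joint lower semi-continuity of $(x, \theta) \mapsto R(\theta \parallel K(x, \cdot))$. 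Combined with joint convexity of relative entropy applied inside the disintegration, this yields
\[
\liminf_{k\to\infty} \int R(\theta \parallel K(x, \cdot))\, Q^{n_k}(dx, d\theta) \;\geq\; \int R(q(x, \cdot) \parallel K(x, \cdot))\, \nu(dx) \;\geq\; I(\nu).
\]
Putting everything together produces the bound $F(\nu) + I(\nu) - \varepsilon \geq \inf_{\mu} \{F(\mu) + I(\mu)\} - \varepsilon$, and letting $\varepsilon \to 0$ closes the argument. The principal obstacle is the tightness step in the non-compact regime, which is why Assumption~\ref{ass:compactSpaceOrLyapunov} is stated directly for the MH kernel $K$; the Feller property is the other new ingredient required to transplant the argument from \cite{BudhirajaAmarjit2019AaAo} to the present setting, and it follows readily from Remark~\ref{rmk:continuityAandR}.
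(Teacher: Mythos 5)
Your proposal is correct and follows essentially the same route as the paper: both invoke the variational representation \eqref{eq:rep} with near-optimal controls, obtain tightness of the controlled occupation measures from Assumption~\ref{ass:compactSpaceOrLyapunov} as in Section~6.10 of \cite{BudhirajaAmarjit2019AaAo}, and pass to the limit using lower semicontinuity of relative entropy, with the Feller property of $K$ (Lemma~\ref{lem:MHKernelFeller}) as the only new ingredient to verify. Your occupation measure on $S\times\mathcal{P}(S)$ with a subsequent disintegration and convexity step is a cosmetic variant of the paper's pair measure $\lambda^n$ on $S^2$; the only slight mis-attribution is that $F(\bar L^{n_k})\to F(\nu)$ needs only continuity of $F$, not the Feller property, which enters solely in the lower semicontinuity of $\gamma\mapsto R(\gamma\parallel\nu\otimes K)$.
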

As mentioned in Section \ref{sec:LDPMH}, under \ref{ass:targetAbsContLambda}-\ref{ass:compactSpaceOrLyapunov}, the arguments from \cite{BudhirajaAmarjit2019AaAo} can be used. We include the main steps here for self-containment and convenience of the reader; we emphasise that once the Feller property of $K(x, dy)$ has been established, this part of the proof goes precisely as in \cite{BudhirajaAmarjit2019AaAo}. 

\begin{lemma}
    \label{lem:MHKernelFeller}
    Under Assumptions~\ref{ass:targetAbsContLambda}-\ref{ass:proposalDistributionAbsCont}, the Metropolis-Hastings transition kernel $K(x,\cdot)$ satisfies the Feller property. 
\end{lemma}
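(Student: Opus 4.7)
The plan is to show that for any sequence $x_n \to x$ in $S$ and any $f \in C_b(S)$,
\begin{equation*}
  \int_S f(y)\,K(x_n, dy) \;\longrightarrow\; \int_S f(y)\,K(x, dy),
\end{equation*}
which, by the portmanteau theorem, is equivalent to $K(x_n, \cdot) \Rightarrow K(x, \cdot)$. Using the decomposition $K(x, dy) = a(x, dy) + r(x)\delta_x(dy)$, I would split the integral into the \emph{acceptance part} $\int_S f(y) a(x_n, y)\,dy$ and the \emph{rejection part} $r(x_n) f(x_n)$, and handle them separately.

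The rejection part is immediate: from Remark~\ref{rmk:continuityAandR}, the function $r$ is continuous on all of $S$ (including at points where $\pi$ vanishes), so $r(x_n) \to r(x)$ and $f(x_n) \to f(x)$ by continuity of $f$, giving $r(x_n) f(x_n) \to r(x) f(x)$.

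The acceptance part is the substantive step. I would first establish pointwise convergence $a(x_n, y) \to a(x, y)$ for $\lambda$-a.e.\ $y \in S$. When $x \in S_+$, one has $x_n \in S_+$ for all large $n$ (by continuity of $\pi$), and continuity of $\pi$ and $J$ combined with the explicit density formula \eqref{eq:acceptanceDensity} give pointwise convergence for every $y$ with $\pi(y) > 0$ and hence $\lambda$-a.e. When $x \notin S_+$, Remark~\ref{rmk:continuityAandR} already supplies the $\lambda$-a.e.\ limit $a(x_n, y) \to J(y|x) = a(x, y)$. Next, I would observe that the total masses converge:
\begin{equation*}
  \int_S a(x_n, y)\,dy = 1 - r(x_n) \;\longrightarrow\; 1 - r(x) = \int_S a(x, y)\,dy,
\end{equation*}
again by continuity of $r$. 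Together, pointwise a.e.\ convergence plus convergence of total mass is exactly the hypothesis of Scheff\'e's lemma, yielding $\|a(x_n, \cdot) - a(x, \cdot)\|_{TV} \to 0$. Since $f$ is bounded, this implies $\int f(y) a(x_n, y)\,dy \to \int f(y) a(x, y)\,dy$, and combining with the rejection part gives the claim.

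The main subtlety to be careful about is why a direct dominated convergence argument is insufficient: although $a(x_n, y) \le J(y|x_n) \le \|J\|_\infty$, the constant dominating function $\|J\|_\infty$ is \emph{not} $\lambda$-integrable when $S$ is unbounded, so one cannot hand a pointwise limit straight to DCT. Routing through Scheff\'e (i.e.\ converting the convergence of total masses into $L^1$ convergence of the densities) is what bypasses this issue. The other point to treat carefully is the case $x \notin S_+$, where $\pi(x) = 0$ makes the Hastings ratio degenerate in the limit; here one leans on the analysis already carried out in Remark~\ref{rmk:continuityAandR} to conclude that $a(x_n, y)$ still converges a.e.\ to the limiting density $J(y|x)$, and correspondingly $r(x_n) \to 0 = r(x)$.
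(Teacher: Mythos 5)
Your proof is correct, and it follows the same overall structure as the paper's: split $K(x_n,\cdot)$ into the acceptance density and the rejection atom, handle the atom by the continuity of $r$ and $f$, and pass to the limit in the acceptance integral. The one genuine difference is in how that last limit is justified. The paper simply invokes dominated convergence, whereas you correctly observe that the only obvious uniform bound, $|f(y)\,a(x_n,y)|\le \|f\|_\infty\|J\|_\infty$, is a constant and hence not $\lambda$-integrable on an unbounded $S$, so the textbook DCT does not apply as stated (the same remark applies to the DCT step in Remark~\ref{rmk:continuityAandR}). Your repair via Scheff\'e's lemma — pointwise a.e.\ convergence of the nonnegative densities $a(x_n,\cdot)\to a(x,\cdot)$ together with convergence of the total masses $a(x_n,S)=1-r(x_n)\to 1-r(x)=a(x,S)$ yields $L^1$ convergence, hence convergence of $\int f\,a(x_n,\cdot)\,d\lambda$ for bounded $f$ — is sound and arguably more rigorous than the paper's one-line appeal. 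An equivalent fix is the generalized dominated convergence theorem with the varying dominating functions $g_n=\|f\|_\infty J(\cdot|x_n)$, which converge pointwise to $\|f\|_\infty J(\cdot|x)$ with $\int g_n\,d\lambda=\|f\|_\infty=\int g\,d\lambda$; either route closes the gap. Your case analysis at points $x\notin S_+$, leaning on Remark~\ref{rmk:continuityAandR} for the a.e.\ limit $a(x_n,y)\to J(y|x)=a(x,y)$ and $r(x_n)\to 0=r(x)$, is also handled correctly.
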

\begin{proof}
Recall the form \eqref{eq:MHtransitionKernel} for $K$, with $a(x,y)$ in \eqref{eq:acceptanceDensity} corresponding to the probability density of the acceptance part and $r$ corresponding to the rejection part.  The assumptions ensure that both $a$ and $r$ are continuous (see Remark \ref{rmk:continuityAandR}) and bounded as functions of $x$.  Consider now a function $f \in C_b (S)$, and a sequence $\{x_n\}_{n\in\mathbb{N}}\subset S$ such that $x_n\to x\in S$. By dominated convergence, we have
    \begin{align*}
        \int_Sf(y)K(x_n,dy)&=\int_Sf(y)a(x_n,y)dy+f(x_n)r(x_n)\\
        &\to \int_Sf(y)a(x,y)dy+f(x)r(x)=\int_Sf(y)K(x,dy).
    \end{align*}
   An application of the Portmanteau theorem then completes the proof.

\end{proof}

\begin{proof}[Proof of Proposition \ref{lemma:upper}]
In \eqref{eq:rep}, take a control sequence $\{ \bar \mu ^n _i \}$ such that 
\begin{align*}
&\bE \left[ F(\bar L ^n) + \frac{1}{n} \sum_{i=1} ^n R(\bar \mu ^n _i \parallel K(\bar X ^n _i, \cdot)) \right] \\
&\qquad \leq 
 \inf _{\{ \hat \mu ^n _i \} } \bE \left[ F(\hat L ^n) + \frac{1}{n} \sum_{i=1} ^n R(\hat \mu ^n _i \parallel K(\hat X ^n _i, \cdot)) \right] + \frac{1}{n},
\end{align*}
where $\bar L ^n$ is the controlled empirical measure associated with $\{ \bar \mu ^n _i\}$. Let 
\[
    \lambda ^n (dx \times dy) = \frac{1}{n} \sum _{i=1} ^n \delta _{\bar X ^n _{i-1}}(dx) \bar \mu ^n _i (dy). 
\]
By Assumption \ref{ass:compactSpaceOrLyapunov}, $\{ ( L ^n, \lambda ^n) \}$ is tight; see Section 10 in \cite{BudhirajaAmarjit2019AaAo}. Thus, there is a subsequence, also denoted by $n$, such that $\{ ( L ^n, \lambda ^n) \}$ converge along that subsequence, to some limit $(\bar L, \lambda)$, and it is enough to prove the upper bound \eqref{eq:upper} for this subsequence. In fact, taking $n \to \infty$, we have
\begin{align*}
\liminf _{n \to \infty} - \frac{1}{n} \bE _{x_n} \left[ e^{-n F(\bar L ^n)} \right]  &\geq \bE \left[ F(\bar L) + R(\lambda \parallel \bar L \otimes K) \right]  \\
& \geq \inf _{\nu \in \calP (S) } \left[ F(\nu) + \inf _{\gamma \in A (\nu)} R(\gamma \parallel \nu \otimes K ) \right] \\
&= \inf _{\nu \in \calP (S) } \left[ F(\nu) +  I(\nu) \right].
\end{align*}
 \end{proof}

\section{Laplace principle lower bound}
\label{sec:lower}
We now proceed to prove the Laplace principle lower bound \eqref{eq:lower}.
\begin{proposition}
    \label{lemma:lower}
    Let $\{L^n\}_{n\ge 0}$ be the empirical measures defined in~\eqref{eq:empiricalMeasure} and define $I:\mathcal{P}(S)\to[0,\infty]$ as in~\eqref{eq:rateFunction}. Assume~\ref{ass:targetAbsContLambda}-\ref{ass:proposalDistributionAbsCont}. Then, for $x \in S$,
    \begin{equation}
    \label{ineq:LaplaceLowerBound}
        \limsup_{n\to\infty}-\frac{1}{n}\log \mathbb{E}_{x}\left[e^{-nF(L^n)}\right]\le \inf_{\nu\in\mathcal{P}(S)}[F(\nu)+I(\nu)].
    \end{equation}
\end{proposition}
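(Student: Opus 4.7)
The plan is to use the variational representation \eqref{eq:rep} and exhibit, for each $\delta>0$, a control sequence $\{\bar\mu^n_i\}$ for which
\[
\limsup_{n\to\infty}\bE\!\left[F(\bar L^n)+\frac{1}{n}\sum_{i=1}^n R(\bar\mu^n_i\parallel K(\bar X^n_i,\cdot))\right]\le\inf_{\nu\in\calP(S)}[F(\nu)+I(\nu)]+\delta.
\]
I would start by fixing $\delta>0$ and choosing a near-optimal $\nu^\star\in\calP(S)$ with $I(\nu^\star)<\infty$ and $F(\nu^\star)+I(\nu^\star)\le\inf_{\nu}[F(\nu)+I(\nu)]+\delta/3$. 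By Lemma \ref{lem:transitionKernelForm} there is a kernel $q^\star(x,dy)=\alpha^\star(x,dy)+\rho^\star(x)\delta_x(dy)$ for which $\nu^\star$ is invariant and $I(\nu^\star)=\int_SR(q^\star(x,\cdot)\parallel K(x,\cdot))\,\nu^\star(dx)$.

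The key technical step is to replace $q^\star$ by a nearby ergodic kernel $q^\varepsilon$ that is usable as a feedback control. The obstacle is that $q^\star$ may fail to be ergodic: by Lemma \ref{lem:nuLambdaAndSingularInvariantForQ}, on the singular component of $\nu^\star$ one has $q^\star(x,\cdot)=\delta_x(\cdot)$, so the controlled chain can get stuck there. I would construct $q^\varepsilon$ by mixing $q^\star$ with a kernel providing sufficient spreading, e.g.\
\[
q^\varepsilon(x,dy):=(1-\varepsilon)\,q^\star(x,dy)+\varepsilon\,K(x,dy),
\]
and verify four properties: (i) joint convexity of relative entropy together with $R(K(x,\cdot)\parallel K(x,\cdot))=0$ gives $R(q^\varepsilon(x,\cdot)\parallel K(x,\cdot))\le(1-\varepsilon)R(q^\star(x,\cdot)\parallel K(x,\cdot))$; (ii) the strict positivity of $J$ (Assumption \ref{ass:proposalDistributionAbsCont}) makes $q^\varepsilon$ irreducible on $S_+$; (iii) combined with Assumption \ref{ass:compactSpaceOrLyapunov}, $q^\varepsilon$ admits a unique invariant distribution $\nu^\varepsilon$ with $\nu^\varepsilon\Rightarrow\nu^\star$ as $\varepsilon\to0$; and (iv) $\int_SR(q^\varepsilon(x,\cdot)\parallel K(x,\cdot))\,\nu^\varepsilon(dx)\to I(\nu^\star)$.

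With $q^\varepsilon$ in hand I take the feedback control $\bar\mu^n_i:=q^\varepsilon(\bar X^n_{i-1},\cdot)$. The controlled process is then a Markov chain with kernel $q^\varepsilon$; by ergodicity $\bar L^n\Rightarrow\nu^\varepsilon$ almost surely, and the running cost simplifies to $\bar L^n(g^\varepsilon)$, where $g^\varepsilon(x):=R(q^\varepsilon(x,\cdot)\parallel K(x,\cdot))$. The ergodic theorem together with a uniform-integrability bound supplied by Assumption \ref{ass:compactSpaceOrLyapunov} then gives $\bar L^n(g^\varepsilon)\to\nu^\varepsilon(g^\varepsilon)$; combined with continuity of $F$ and \eqref{eq:rep},
\[
\limsup_{n\to\infty}-\frac{1}{n}\log\bE_x[e^{-nF(L^n)}]\le F(\nu^\varepsilon)+\nu^\varepsilon(g^\varepsilon).
\]
Sending $\varepsilon\to0$ and using (iii)--(iv) yields \eqref{ineq:LaplaceLowerBound}.

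The hard part is the perturbation step, specifically the joint verification of (iii) and (iv): simultaneously establishing convergence of the invariant measures and of the associated costs. This is precisely where the standard Budhiraja--Dupuis machinery breaks down for Metropolis--Hastings, since in that framework $I(\nu^\star)<\infty$ forces $\nu^\star\ll\pi$ and near-optimal controls are constructed directly from the Radon--Nikodym density. For the MH chain, the rejection component $r(x)\delta_x$ of $K$ allows $\nu^\star$ to carry a singular part, and the construction has to exploit the additive structure $K=a+r\delta$ together with the continuity/positivity assumptions on $\pi$ and $J$ from Assumptions \ref{ass:targetAbsContLambda}--\ref{ass:proposalDistributionAbsCont} and the Feller property (Lemma \ref{lem:MHKernelFeller}) to make the mixing $q^\varepsilon$ work without inflating the cost.
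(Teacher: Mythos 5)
Your overall architecture (variational representation, near-optimal $\nu^\star$, feedback control from an ergodic perturbed kernel, ergodic theorem) matches the paper's, but the core perturbation step is wrong, and it is exactly the step where all the difficulty lives. The kernel-level mixture $q^\varepsilon(x,dy)=(1-\varepsilon)q^\star(x,dy)+\varepsilon K(x,dy)$ does not have an invariant measure close to $\nu^\star$. Take the paper's own canonical example: $\nu^\star=\delta_{x_0}$ with $r(x_0)>0$, whose unique optimal kernel is $q^\star(x,\cdot)=\delta_x(\cdot)$ (Lemma \ref{lem:deltaIsKernelForSingularMeasure}). Then $\mu q^\star=\mu$ for every $\mu$, so the invariance equation $\mu q^\varepsilon=\mu$ reduces to $\mu K=\mu$, and the unique invariant measure of $q^\varepsilon$ is $\nu^\varepsilon=\pi$ for \emph{every} $\varepsilon>0$. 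Your claims (iii) and (iv) both fail: $\nu^\varepsilon\not\Rightarrow\delta_{x_0}$, and the cost $\int R(q^\varepsilon\parallel K)\,d\nu^\varepsilon$ tends to $\int\log(1/r)\,d\pi$ rather than $\log(1/r(x_0))=I(\delta_{x_0})$. Invariant measures are not continuous under kernel perturbations, so property (i) (cost control) and properties (iii)--(iv) (invariant-measure control) cannot be obtained simultaneously by mixing kernels.

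The paper's resolution has two ingredients you are missing. First, the singular part $\nu_s$ of $\nu^\star$ is approximated \emph{at the level of measures}: sample $Y_i\sim\nu_s$ i.i.d., smear each sample over a small ball of radius $\varrho^n$ (chosen using the moduli of continuity of $\log a$ and $\log r$), and pair the resulting absolutely continuous $\nu_s^n$ with an explicitly constructed invariant kernel on those balls; the explicit formula $I(\nu_s)=\int\log(1/r)\,d\nu_s$ and the law of large numbers then give $\lim_n I(\nu_s^n)\le I(\nu_s)$ (Lemmas \ref{lem:construction} and \ref{lem:existenceOfCloseMeasure}). Second, the mixing with $\pi$ that produces ergodicity is done on the \emph{joint} measures, $\gamma^*=(1-\tfrac{\delta}{4})\,\nu^\dag\otimes q+\tfrac{\delta}{4}\,\pi\otimes K$, whose marginals are by construction $(1-\tfrac{\delta}{4})\nu^\dag+\tfrac{\delta}{4}\pi$; disintegrating $\gamma^*$ yields a kernel $q^*$ whose invariant measure is known exactly, and convexity of relative entropy in \emph{both} arguments controls the cost. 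Two further points: the proposition assumes only \ref{ass:targetAbsContLambda}--\ref{ass:proposalDistributionAbsCont}, so you should not lean on \ref{ass:compactSpaceOrLyapunov} here; and the ergodic theorem only gives the bound for $\nu^*$-a.e.\ starting point, so a separate argument (the paper uses $K(x,\Phi)>0$ for all $x$, via the acceptance part $a$) is needed to extend \eqref{ineq:LaplaceLowerBound} to every fixed $x\in S$ — your proposal omits this entirely.
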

As described in Section \ref{sec:LDPMH}, in proving Theorem \ref{thm:LDP}, the lower bound is where the lack of Condition \ref{cond:DE} for the MH kernel $K$ plays a role. To see why the lack of this transitivity condition becomes an issue, one of the consequences of the condition is that if $\nu \in \calP (S)$ is such that $I(\nu) < \infty$, then $\nu \ll \pi$. This property plays an important role in the proof of the LDP for empirical measures of a Markov chain in \cite{BudhirajaAmarjit2019AaAo}—it is implicitly used to define a sequence of near-optimal controls in the representation \eqref{eq:rep}. Here, because of the rejection part of the MH kernel, which is the reason Condition \ref{cond:DE} does not hold, the implication is not true in general. As a counterexample, consider an $x_0 \in S$ such that $r(x_0) > 0$ and take $\nu = \delta _{x_0} \in \calP (S)$. Then $\nu$ is not absolutely continuous with respect to $\lambda$, and thus not with respect to $\pi$. Consider the transition kernel $\tilde q (x, \cdot) = \delta _{x}$. Then $\nu$ is invariant for $\tilde q$ and from \eqref{eq:rateFuncAsInf},
\begin{equation*}
    I(\nu)\le  \int_S R(\delta_{x}(\cdot)\parallel K(x,\cdot))\nu(dx) = R(\delta_{x_0}(\cdot)\parallel K(x_0,\cdot)).
\end{equation*}
From \eqref{eq:RNderivative}, the Radon-Nikodym derivative of $\delta_{x_0}(\cdot)$ with respect to $K(x,\cdot)$, for $x=x_0$, is given by $f_{x_0}(y)=\frac{1}{r(x_0)}I\{y=x_0\}$.  It follows that the rate function is finite, since
\begin{equation*}
    I(\nu)\le R(\delta_{x_0}(\cdot)\parallel K(x_0,\cdot)) \le \log\frac{1}{r(x_0)}<\infty.
\end{equation*} 

We circumvent the problem of not having Condition \ref{cond:DE} by showing that if $\nu\in\mathcal{P}(S)$ is such that $I(\nu)<\infty$, then there exists another probability measure $\nu^*\in\mathcal{P}(S)$ that is arbitrarily close to $\nu$, and satisfies $\nu ^\star \ll \pi$ and $I(\nu^*)\le I(\nu)+\varepsilon$. 

To prove the existence of such a measure, recall that the decomposition \eqref{eq:decomposition} allows us to separate $\nu$ into two parts: one part, $\nu _\lambda$, with a density with respect to $\lambda$ (and thus with respect to $\pi$) and one, $\nu _s$, that is singular with respect to $\lambda$. The idea is to approximate the latter with measures that are absolutely continuous with respect to $\lambda$. This allows us to construct near-optimal controls in the representation formula, which in turn are used to prove Proposition \ref{lemma:lower}. 

The following is a brief outline of the argument.

In Lemma \ref{lem:deltaIsKernelForSingularMeasure}, we characterize the transition kernels $q$ that achieve the infimum in \eqref{eq:rateFuncAsInf} for $\nu _s \in \calP (S)$ such that $\nu _s \perp \lambda$ and $I(\nu _s) < \infty$. Next, in Lemma \ref{lem:construction}, we construct a sequence of random measures $\{ \nu ^n _s \} \subset \calP (S)$ that are absolutely continuous with respect to $\lambda$, $\nu ^n _s \Rightarrow \nu$ as $n \to \infty$, and $I(\nu ^n _s) \to I(\nu)$. This construction allows us to show (Lemma \ref{lem:existenceOfCloseMeasure}) that for any $\nu \in \calP (S)$ such that $I(\nu) < \infty$, for any $\varepsilon > 0$, there exists a $\nu ^\dagger \in \calP(S)$ that is arbitrarily close to $\nu$, $\nu ^\dagger \ll \lambda$ and $I(\nu ^\dagger) \leq I(\nu) + \varepsilon$. The existence of such a probability is then used in  Lemma~\ref{lem:existenceOfNu*} to prove the existence of a $\nu ^* \in \calP (S)$ with the desired properties. From there, the proof of Proposition \ref{lemma:lower} follows largely that of \cite{BudhirajaAmarjit2019AaAo}.
   
\begin{lemma}
\label{lem:deltaIsKernelForSingularMeasure}
Let $\nu_s \in \calP(S)$ be such that $\nu _s \perp \lambda$ and $I(\nu_s)<\infty$. Then, $q(x,\cdot)=\delta_x(\cdot)$ $\nu_s-$a.s.\ is the only transition kernel that satisfies \eqref{eq:rateFunctionIntegral}, i.e.,
\begin{equation*}
    I(\nu_s)=\int_SR(\delta_x(\cdot)\parallel K(x,\cdot))\nu_s(dx) = \int_S\log\frac{1}{r(x)}\nu_s(dx) 
\end{equation*}

\end{lemma}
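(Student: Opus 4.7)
The plan is to exploit the Lebesgue decomposition together with the structural form of any minimizing kernel provided by Lemma~\ref{lem:transitionKernelForm}, in order to force the entire mass of $q(x,\cdot)$ onto $x$ for $\nu_s$-a.e.\ $x$. First, since $I(\nu_s)<\infty$, Lemma~\ref{lem:transitionKernelForm} ensures that any minimizer $q$ in \eqref{eq:rateFunctionIntegral} has the form
\[
    q(x,\cdot)=\alpha(x,\cdot)+\rho(x)\delta_x(\cdot), \quad \nu_s\text{-a.s.},
\]
with $\alpha(x,\cdot)\ll a(x,\cdot)\ll\lambda$ and $\rho(x)\in[0,1]$ measurable. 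Because $\nu_s\perp\lambda$, we may fix a measurable set $S_s\subset S$ with $\lambda(S_s)=0$ and $\nu_s(S_s)=1$.

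Next I would apply the invariance identity $\nu_s(A)=\int_S q(x,A)\,\nu_s(dx)$ with $A=S_s$. Since $\alpha(x,\cdot)\ll\lambda$ and $\lambda(S_s)=0$, we have $\alpha(x,S_s)=0$ for $\nu_s$-a.e.\ $x$, so
\[
    1=\nu_s(S_s)=\int_{S}\bigl(\alpha(x,S_s)+\rho(x)\delta_x(S_s)\bigr)\,\nu_s(dx)=\int_{S_s}\rho(x)\,\nu_s(dx).
\]
Combined with $\rho\le 1$ and $\nu_s(S_s)=1$, this forces $\rho(x)=1$ for $\nu_s$-a.e.\ $x$, and hence $\alpha(x,S)=1-\rho(x)=0$ for $\nu_s$-a.e.\ $x$. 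Consequently $q(x,\cdot)=\delta_x(\cdot)$, $\nu_s$-a.s., which is the uniqueness claim.

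Finally, I would compute the relative entropy for this kernel. Using the Radon--Nikodym derivative \eqref{eq:RNderivative} evaluated with $\alpha\equiv 0$ and $\rho\equiv 1$, we get $f_x(y)=\frac{1}{r(x)}\mathbbm{1}\{y=x\}$ (note $r(x)>0$ for $\nu_s$-a.e.\ $x$, for otherwise $\delta_x\not\ll K(x,\cdot)$ on a set of positive $\nu_s$-measure and the integrated relative entropy would be infinite, contradicting $I(\nu_s)<\infty$). Integrating against $\delta_x$ yields
\[
    R(\delta_x(\cdot)\parallel K(x,\cdot))=\log f_x(x)=\log\frac{1}{r(x)},
\]
and integrating against $\nu_s$ gives the claimed expression for $I(\nu_s)$. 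The only delicate point in this plan is the step that pins down $\rho\equiv 1$ on the singular support, but the measurability of $\rho$ together with $\rho\le 1$ and the exact equality $\int_{S_s}\rho\,d\nu_s=1=\nu_s(S_s)$ makes this immediate; the rest is routine bookkeeping with the decomposition of $K$.
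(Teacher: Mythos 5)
Your proposal is correct and follows essentially the same route as the paper: Lemma~\ref{lem:transitionKernelForm} gives the decomposition $q=\alpha+\rho\delta_x$, testing invariance on the singular support $S_s$ forces $\rho\equiv 1$ $\nu_s$-a.s., and the entropy is then computed from the Radon--Nikodym derivative $f_x(y)=\tfrac{1}{r(x)}I\{y=x\}$ (the paper packages this last step as an appeal to Proposition~\ref{prop:decomositionOfRateFunctionAlphaRho}, which is the same computation). Your remark that $r(x)>0$ $\nu_s$-a.e.\ is a worthwhile explicit observation that the paper makes only in the discussion following Lemma~\ref{lem:construction}.
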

\begin{proof}
By Lemma~\ref{lem:transitionKernelForm}, if $I(\nu_s)<\infty$, then the kernels $q(x,\cdot)$ that satisfy \eqref{eq:rateFunctionIntegral} are of the form $\alpha(x,\cdot)+\rho(x)\delta_x(\cdot)$, $\nu_s-$a.s. with $\alpha(x,\cdot)\ll a(x,\cdot)$,  $\nu_s-$a.s. Moreover, $a(x,\cdot)$ is in turn absolutely continuous with respect to $\lambda$. Observe that since the set $S_s$ satisfies $\lambda(S_s)=0$, then $a(x,S_s)=0$ and therefore $\alpha(x,S_s)=0$. On the other hand, $\nu_s(S_s)=1$ by definition, and by invariance the following must hold:
\begin{align*}
    1&=\nu_s(S_s) \\
    &=\int_Sq(x,S_s)\nu_s(dx) \\
    &=\int_S\left(\alpha(x,S_s)+\rho(x)\delta_x(S_s)\right)\nu_s(dx)\\
    &=\int_S\left(0+\rho(x)\delta_x(S_s)\right)\nu_s(dx)\\ &=\int_{S_s}\rho(x)\nu_s(dx).
\end{align*}
Given that $\rho(x)\le 1 \;\forall x\in S$, $\int _{S_s} \rho(x) \nu _s (dx) = 1$ can only hold if $\rho(x)\equiv 1$ $\nu_s-$a.s. We conclude that the singular measure $\nu_s$ admits only $q(x,\cdot)=\delta_x(\cdot)$ $\nu_s-$a.s.\ as invariant kernel. This implies that
\[
 I(\nu_s)=\int_SR(\delta_x(\cdot)\parallel K(x,\cdot))\nu_s(dx).
 \]
Furthermore, by Proposition \ref{prop:decomositionOfRateFunctionAlphaRho}, we have
\begin{equation*}
   \int_SR(\delta_x(\cdot)\parallel K(x,\cdot))\nu_s(dx) = \int_S\log\frac{1}{r(x)}\nu_s(dx).
\end{equation*}
This completes the proof.
\end{proof}

We now move to the construction of a sequence of random measures $\{ \nu ^n _s \} \subset \calP (S)$ that can be used to approximate $\nu _s$  arbitrarily well and satisfy $\lim _{n \to \infty} I(\nu ^n _s) \leq I(\nu)$ a.s., while maintaining absolute continuity with respect to $\lambda$. To facilitate this, we define, for $\varepsilon > 0$ and $x \in S_+$, 
\begin{align}
    \Delta_\epsilon(x)= \sup\{t \,:\,&\lvert\log a(x,x) - \log a(y,z)\rvert < \epsilon \text{ and }\nonumber \\
    &\lvert\log r(x) - \log r(y)\rvert < \epsilon, \qquad\forall y,z\in B_t(x)\}.    \label{def:deltaEpsilon}
    \end{align}
    
\begin{lemma}
\label{lem:construction}
Take $\nu _s \in \calP (S)$ such that $\nu _s \perp \lambda$ and $I(\nu _s) < \infty$. Let $\{Y_i\}_{i=1}^\infty$ be independent and identically distributed according to $\nu_s$. 
    For $n \in \bN$, define 
    \begin{equation}
    \label{def:varrhon}
        \varrho^n = \min\left\{ \frac{1}{n}, \min_{1\le i\le n}\Delta_{\frac{1}{n}}(Y_i), \frac{1}{2}\min_{Y_i\neq Y_j}d_S(Y_i,Y_j), \frac{1}{2}\min_{1\le i \le n}d_S(\partial S, Y_i), \min_{1\le i \le n}a(Y_i,Y_i)\right\}.
    \end{equation}
    Let $V_n= \lambda(B_{\varrho^n}(0))$, the (Lebesgue) volume of the balls of radius $\varrho^n$, and define the sequence of random measures $\{\nu_s^n\}_{n \in \bN}\subset\mathcal{P}(S)$ by
    \begin{equation}
    \label{def:nu_s_n}
        \nu_s^n(dx):=\frac{1}{n}\frac{1}{V_n}\sum_{i=1}^nI\{x\in B_{\varrho^n}(Y_i)\}\lambda(dx).
    \end{equation}

    This sequence satisfies the following properties:
    \begin{enumerate}[label=(\alph*)]
    \item \label{item:abscont} $\nu_s^n\ll\lambda$ for all $n\in\mathbb{N}$,
    \item \label{item:weakconv} $\nu_s^n\Rightarrow \nu_s$ a.s.,
    \item \label{item:finiteRate} There is an $n_0 \in \bN$ such that, for all $n > n_0$, $I(\nu_s^n)<\infty$ a.s., 
    \item \label{item:convRate} $\lim_{n\to\infty}I(\nu_s^n)\le\ I(\nu_s)$ a.s.
\end{enumerate}
\end{lemma}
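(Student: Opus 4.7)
My plan is to handle the four items in order, with a single conceptual insight driving both (c) and (d). Part (a) is immediate from \eqref{def:nu_s_n}, which presents $\nu_s^n$ as a Lebesgue density. For part (b), I would introduce the standard empirical measure $\tilde\nu_s^n = \frac{1}{n}\sum_{i=1}^n \delta_{Y_i}$, which converges weakly to $\nu_s$ a.s.\ by the usual law of large numbers for empirical measures on a Polish space. I would then show $d_{LP}(\nu_s^n,\tilde\nu_s^n)\le\varrho^n$; since $\varrho^n\le 1/n\to 0$, this completes (b). The $d_{LP}$ bound follows by observing that $\nu_s^n$ is obtained from $\tilde\nu_s^n$ by smearing each atom $\delta_{Y_i}$ uniformly over $B_{\varrho^n}(Y_i)$: for any closed $A\subset S$, $\nu_s^n(A)\le\tilde\nu_s^n(A^{\varrho^n})$, and symmetrically $\tilde\nu_s^n(A)\le\nu_s^n(A^{\varrho^n})$.

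For (c) and (d), the key observation is that $q(x,\cdot)=\delta_x(\cdot)$ is invariant for \emph{every} probability measure, because $\int\delta_x(A)\,\mu(dx)=\mu(A)$. In particular, this kernel is admissible in the infimum \eqref{eq:rateFuncAsInf} defining $I(\nu_s^n)$, so
\begin{equation*}
I(\nu_s^n)\;\le\; \int_S R\bigl(\delta_x(\cdot)\,\parallel\,K(x,\cdot)\bigr)\,\nu_s^n(dx)\;=\;\int_S\log\frac{1}{r(x)}\,\nu_s^n(dx),
\end{equation*}
where the equality uses Proposition \ref{prop:decomositionOfRateFunctionAlphaRho} with $\alpha\equiv 0$, $\rho\equiv 1$. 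This is precisely the same functional that realises $I(\nu_s)$ in Lemma \ref{lem:deltaIsKernelForSingularMeasure}, which provides a direct bridge between $\nu_s^n$ and $\nu_s$.

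For (c), the constraint $\varrho^n\le\Delta_{1/n}(Y_i)$ built into \eqref{def:varrhon} gives, for every $x\in B_{\varrho^n}(Y_i)$, $|\log r(x)-\log r(Y_i)|<1/n$, and therefore
\begin{equation*}
\int_S\log(1/r(x))\,\nu_s^n(dx)\;\le\;\frac{1}{n}+\frac{1}{n}\sum_{i=1}^n\log\frac{1}{r(Y_i)}.
\end{equation*}
Since $I(\nu_s)<\infty$ forces $r>0$ $\nu_s$-a.s.\ (Lemma \ref{lem:deltaIsKernelForSingularMeasure}), each $\log(1/r(Y_i))$ is a.s.\ finite, and the right-hand side is a.s.\ finite once the quantities defining $\varrho^n$ are simultaneously positive, which is how $n_0$ is obtained. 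For (d), I would apply the strong law of large numbers to the i.i.d.\ sequence $\{\log(1/r(Y_i))\}$: its empirical average converges a.s.\ to $\int\log(1/r(x))\,\nu_s(dx)=I(\nu_s)$, and combining with the bound above yields $\limsup_n I(\nu_s^n)\le I(\nu_s)$ a.s.

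The tempting misstep would be to build an absolutely continuous, $\nu_s^n$-invariant kernel $q^n$, such as uniform on the ball containing $x$: any such choice incurs a divergent entropy cost of order $-\log V_n\to\infty$ from the Radon--Nikodym derivative against the acceptance density $a(x,\cdot)$, because an absolutely continuous kernel is being forced to concentrate on a set of vanishing Lebesgue volume. The resolution is that one need not produce any absolutely continuous candidate at all: the identity kernel $\delta$ is universally invariant, and its relative entropy against $K$ is exactly $\log(1/r(x))$, which is precisely the functional realising $I(\nu_s)$ in the singular limit. Verifying that the other minima in \eqref{def:varrhon} are a.s.\ strictly positive (positive pairwise separation among distinct $Y_i$'s, positivity of $a(Y_i,Y_i)=J(Y_i|Y_i)$ via \ref{ass:proposalDistributionAbsCont}, and positive distance to $\partial S$) is routine and completes the argument.
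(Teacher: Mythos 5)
Your proof is correct, and for parts \ref{item:finiteRate} and \ref{item:convRate} it takes a genuinely different and simpler route than the paper. The paper constructs an explicit $\nu_s^n$-invariant kernel $q^n(x,dy)$ that spreads mass $V_n$ uniformly over the ball containing $x$ and keeps mass $1-V_n$ as a point mass at $x$; it then verifies invariance (using the non-overlap of the balls), computes the relative entropy of this mixture against $K$, and shows that the acceptance-part contribution is of order $-V_n\log\bigl(C_dV_n^{1/d}\bigr)\to 0$ while the rejection-part contribution tends to $\int\log(1/r)\,d\nu_s$ by the strong law of large numbers. You bypass all of this by observing that $\delta_x$ is invariant for \emph{every} probability measure and hence admissible in \eqref{eq:rateFuncAsInf}, giving directly $I(\nu_s^n)\le\int\log(1/r(x))\,\nu_s^n(dx)$; the $\Delta_{1/n}$-constraint on $\varrho^n$ then yields the bound $\frac{1}{n}\sum_i\log(1/r(Y_i))+\frac{1}{n}$ and the SLLN finishes both \ref{item:finiteRate} and \ref{item:convRate}. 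This is legitimate — the infimum in \eqref{eq:rateFuncAsInf} is over all invariant kernels with no ergodicity requirement, and the paper itself uses $\tilde q(x,\cdot)=\delta_x$ in exactly this way in the counterexample preceding Proposition~\ref{lemma:lower}; moreover the kernels built in this lemma are never reused downstream (Lemma~\ref{lem:existenceOfNu*} constructs its own $q^*$), so nothing is lost. Your argument also renders several ingredients of \eqref{def:varrhon} (the $a(Y_i,Y_i)$ term, the pairwise separation, and the $a$-part of $\Delta_\epsilon$) unnecessary, and your $n_0$ is not needed for the reason the paper needs it (the paper requires $V_n<1$ so that $q^n$ is a probability kernel). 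One small remark: the ``tempting misstep'' you describe (a fully uniform kernel on the ball, costing $-\log V_n\to\infty$) is not quite what the paper does — the paper damps that cost by giving the absolutely continuous component only weight $V_n$ — but your diagnosis of why the naive choice fails, and your resolution via the identity kernel, are both sound.

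For parts \ref{item:abscont} and \ref{item:weakconv} your argument is essentially the paper's in substance: the paper tests against bounded Lipschitz $f$ and squeezes $\int f\,d\nu_s^n$ between $\frac{1}{n}\sum_if(Y_i)\pm L_f\varrho^n$, whereas you bound $d_{LP}(\nu_s^n,\tilde\nu_s^n)\le\varrho^n$ directly; both reduce to the Glivenko--Cantelli/Varadarajan convergence of the empirical measure and are equally valid.
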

Before we embark on the proof, some comments on the construction. 
First, because we consider $\nu _s$ such that $I (\nu _s) < \infty$, $\nu _s$ can only put mass on points in $S_+$: if $\nu _s (x) > 0$ for some $x$ such that $\pi (x) = 0$, then $r(x) = 0$ (see Remark \ref{rmk:continuityAandR}). By Lemma \ref{lem:transitionKernelForm}, the corresponding transition kernel is of the form $q(x,\cdot) = \alpha (x, \cdot)$, where $\alpha(x,\cdot) \ll a(x,\cdot)$. This is not compatible with $\nu _s$ being singular with respect to $\lambda$; see also Lemma \ref{lem:deltaIsKernelForSingularMeasure}. Thus, the $Y_i$s used in the construction are in $S_+$ $\nu _s$-a.s. 

Next, we verify that for any fixed $n \in \bN$, the radius $\varrho ^n$ of the $B _{\varrho ^n} (Y_i)$-balls is well-defined, i.e., $\varrho ^n > 0$ $\nu _s$-a.s. Note that if $\nu _s = \delta _{x}$ for some $x \in S_+$, then $\varrho ^n$ becomes 
\begin{equation*}
        \varrho^n = \min\left\{ \frac{1}{n}, \Delta_{\frac{1}{n}}(x), \frac{1}{2}d_S(\partial S, x), a(x,x)\right\}.
    \end{equation*}

Because $I(\nu _s) < \infty$, we have for $Y_i \sim \nu _s$,
    \[
        \bE \left[ \log \frac{1}{r(Y_i)} \right] = \int _S \log \frac{1}{r(x)} \nu _s(dx) = I(\nu _s) < \infty.
    \]
    It follows that $r(Y_i) > 0 $ w.p.\ 1. From Assumption~\ref{ass:proposalDistributionAbsCont} we have $a(Y_i,Y_i)=J(Y_i|Y_i)>0$. Since the support of $\nu _s$ is in $S_+$ (an open subset of $S$; see Assumption~\ref{ass:targetAbsContLambda}), and $a(Y_i, Y_i)$ and $r(Y_i)$ are both strictly positive $\nu _s$-a.s., the continuity of $r(\cdot)$ and $a(\cdot,\cdot)$ on $S$ and $S_+\times S$, respectively, ensures that $\Delta_{\frac{1}{n}}(Y_i)>0$, $i=1, \dots, n$. Moreover, $d_S(Y_i,Y_j)>0$ for $Y_i\neq Y_j$ by definition, and $d_S(\partial S,Y_i)>0$ $\nu _s$-a.s.\ since the support of $\nu_s$ is a subset of $S_+$, which is an open subset of $S$. 
    Combined, these show that $\varrho^n>0$ $\nu_s$-a.s.

\begin{proof}[Proof of Lemma \ref{lem:construction}]
Part \ref{item:abscont} follows directly from the definition \eqref{def:nu_s_n} of $\nu_s^n$. 
In particular, since $\lambda$ and $\pi$ are equivalent measures (Assumption~\ref{ass:targetAbsContLambda}), then $\nu_s^n\ll\pi$.

To prove \ref{item:weakconv}, that the sequence $\{\nu_s^n\}$ converges weakly to $\nu_s$ a.s., we show that for any bounded and Lipschitz continuous function $f$ it holds that $\int_Sfd\nu_s^n\to\int_Sfd\nu_s$ a.s. An application of the Portmanteau theorem then gives the claim.
    
    To this end, let $f \in C_b(S)$ be Lipschitz continuous and denote its Lipschitz constant by $L_f<\infty$. For $n \in \bN$, we have
\begin{equation}
\label{eq:sequenceIntegrals}
    \int_S f(x) \nu_s^n (dx)  = \frac{1}{n}\frac{1}{V_n}\sum_{i=1}^n\int_{B_{\varrho^n}(Y_i)}f(x) \lambda(dx).
\end{equation}

The Lipschitz continuity of $f$ implies that for all $x\in B_{\varrho^n}(Y_i)$ and for all $i \in \{ 1, \dots, n\}$,
\begin{equation*}
    f(Y_i) - L_f\cdot \varrho^n \le f(x) \le f(Y_i) + L_f\cdot \varrho^n.
\end{equation*}
 
By integrating over $B_{\varrho^n}(Y_i)$ and dividing by $V_n$, it follows that 
\begin{equation*}
    f(Y_i) - L_f\cdot \varrho^n \le \frac{1}{V_n} \int_{B_{\varrho^n}(Y_i)} f(x) \lambda(dx) \le f(Y_i) + L_f\cdot \varrho^n, \ \ i =1, \dots, n.
\end{equation*}
This implies the following bounds on the integral \eqref{eq:sequenceIntegrals}:
\begin{align*}
     \frac{1}{n}\sum_{i=1}^nf(Y_i)-\frac{L_f\cdot \varrho^n}{n}&\le\int_Sf(x) \nu_s^n (dx) \le\frac{1}{n}\sum_{i=1}^nf(Y_i)+\frac{L_f\cdot \varrho^n}{n}.
\end{align*}
By the strong law of large numbers, $\frac{1}{n}\sum_{i=1}^n\delta_{Y_i}(\cdot)\Rightarrow\nu_s(\cdot)$ a.s., and it follows that $\frac{1}{n}\sum_{i=1}^nf(Y_i)\to \int_S f d\nu_s$ a.s. Moreover, by construction $\varrho^n\to 0$ as $n \to \infty$, which implies $\frac{L_f\cdot \varrho^n}{n}\to 0$. The squeeze theorem now yields the desired result.

We now move to part \ref{item:finiteRate}. To show that $I(\nu_s^n)$ is finite for large enough $n\in\mathbb{N}$, we first note that by construction, $V_n\to0$ as $n \to \infty$. Therefore, there is an $n_0 \in \bN$ such that $V_n < 1$ for all $n > n_0$. Henceforth, we only consider such $n$. 

Recall the characterization \eqref{eq:rateFuncAsInf} of the rate function,
\begin{equation*}
    I(\nu_s^n)=\inf_{q} \int_SR(q(x,\cdot)\parallel K(x,\cdot))\nu_s^n(dx),
\end{equation*}
where the infimum is taken over all the transition kernels $q(x,dy)$ that are $\nu_s^n-$irreducible. We will now construct such a transition kernel $q^n(x,dy)$, for which it also holds that
\begin{equation*}
    \int_SR(q^n(x,\cdot)\parallel K(x,\cdot))\nu_s^n(dx) < \infty.
\end{equation*}
This in turn implies that $I(\nu_s^n)<\infty$. The collection of transition kernels $\{ q^n \}$ will also be used to show part \ref{item:convRate}. 

We begin by defining $N^n(x)$ as the number of $B_{\varrho^n}(Y_i)$, $i=1, \dots, n$, that $x \in S$ belongs to,
\begin{equation*}
    N^n(x) = \sum_{i=1}^n I\{x\in B_{\varrho^n}(Y_i)\}. %
\end{equation*}

Next, we define $q^n$ by
\begin{align*}
q^n(x,dy) = \frac{1}{N^n(x)}\sum_{i=1}^nI\{x\in B_{\varrho^n}(Y_i)\}I\{y\in B_{\varrho^n}(Y_i)\}dy+ \left(1-V_n\right)\delta_x(dy),
\end{align*}
for $x$ such that $N^n(x) \geq 1$, and otherwise $q^n(x,dy) = \delta _x (dy)$. Then, for all $x \in S$, $q^n(x,\cdot)$ is a transition probability: if $N^n(x) \geq 1$,
\begin{equation*}
    q^n(x,S)=\frac{1}{N^n(x)}\sum_{i=1}^nI\{x\in B_{\varrho^n}(Y_i)\}V_n + \left(1-V_n\right)\delta_x(S)=1,
\end{equation*}
and, for $N^n(x) = 0$, it holds immediately that $q^n (x,S) = 1$. Moreover, due to the choice of $n > n_0$ $q^n(x,A) \in [0,1]$, for every $A \in \calB (S)$. 

To show that $q^n(x,\cdot)$ is also invariant for $\nu_s^n$, consider a set $A\in\mathcal{B}(S)$. We have 
\begin{equation*}
    \nu_s^n(A) = \frac{1}{n}\frac{1}{V_n}\sum_{i=1}^n\int_AI\{x\in B_{\varrho^n}(Y_i)\} \lambda(dx)=\frac{1}{n}\frac{1}{V_n}\sum_{i=1}^n\lambda(A\cap B_{\varrho^n}(Y_i)).
\end{equation*}
Take $x\in S$. If $N^n(x) \geq 1$,
\begin{align*}
    &q^n(x,A) \\
    &=\frac{1}{N^n(x)}\sum_{i=1}^nI\{x\in B_{\varrho^n}(Y_i)\}\int_AI\{y\in B_{\varrho^n}(Y_i)\}\lambda(dy) + \left(1-
    V_n\right)\delta_x(A)\\
    &=\frac{1}{N^n(x)}\sum_{i=1}^nI\{x\in B_{\varrho^n}(Y_i)\}\lambda(A\cap B_{\varrho^n}(Y_i)) + \left(1-V_n\right)\delta_x(A).
\end{align*}
From this it follows that 
\begin{align*}
&\int_Sq^n(x,A)d\nu_s^n(dx)\\
&=\frac{1}{n}\frac{1}{V_n}\sum_{i=1}^n\int_{B_{\varrho^n}(Y_i)}\left(\frac{1}{N^n(x)}\sum_{j=1}^nI\{x\in B_{\varrho^n}(Y_j)\}\lambda(A\cap B_{\varrho^n}(Y_j)) + \left(1-V_n\right)\delta_x(A)\right)\lambda(dx)\\
&=\frac{1}{n}\frac{1}{V_n}\sum_{i=1}^n\int_{B_{\varrho^n}(Y_i)}\left(\lambda(A\cap B_{\varrho^n}(Y_i))+(1-V_n)\lambda(A\cap B_{\varrho^n}(Y_i))\right)\lambda(dx)\\
&=\frac{1}{n}\frac{1}{V_n}\sum_{i=1}^n\lambda(A\cap B_{\varrho^n}(Y_i))=\nu_s^n(A),
\end{align*}
where in the second equality we use that, due to the definition of $\varrho ^n$, there are no overlaps between the $B _{\varrho ^n} (Y_i)$-balls. If instead $N^n(x) = 0$, then $q^n(x,A) = \delta _x (A)$, and we have 
\[
    \int _S \delta _x (A) \nu ^n _s (dx) = \int _A \nu ^n _s (dx) = \nu ^n _s (A).
\]
Combined with the computation for $N^n(x) \geq 1$, this proves the invariance. 

From \eqref{eq:rateFuncAsInf}, $I(\nu ^n _s)$ is defined in terms of the infimum over the set of $\nu_s^n$-invariant kernels \eqref{eq:rateFuncAsInf}. Therefore,
\begin{align}
I(\nu_s^n)&\le\int_SR(q^n(x,\cdot)\parallel K(x,\cdot))\nu_s^n(dx)\nonumber\\
&=\int _{\{x: N^n (x) = 0\}} R(q^n(x,\cdot)\parallel K(x,\cdot))\nu_s^n(dx) \nonumber \\ 
&\qquad + \int _{\{x: N^n (x) \geq 1\}} R(q^n(x,\cdot)\parallel K(x,\cdot))\nu_s^n(dx). \nonumber
\end{align}
For the first integral in the last display, since $\nu^n _s$ has no mass on $\{ x\in S : N^n(x) = 0 \}$, this integral is zero. For the second integral, we have
\begin{align*}
& \int _{\{x: N^n (x) \geq 1\}} R(q^n(x,\cdot)\parallel K(x,\cdot))\nu_s^n(dx) \\
& \quad =\frac{1}{n}\frac{1}{V_n}\sum_{i=1}^n\int_{B_{\varrho^n}(Y_i)}\left(\int_{B_{\varrho^n}(Y_i)}\log\frac{1}{a(x,y)}\lambda(dy)+(1-V_n)\cdot\log\frac{1-V_n}{r(x)}\right)\lambda(dx)\nonumber\\
& \quad =\frac{1}{n}\frac{1}{V_n}\sum_{i=1}^n\left(\iint_{\left(B_{\varrho^n}(Y_i)\right)^2}\log\frac{1}{a(x,y)}\lambda(dydx)+(1-V_n)\int_{B_{\varrho^n}(Y_i)}\log\frac{1-V_n}{r(x)}\lambda(dx)\right)\nonumber\end{align*}
Recalling that we only consider $n > n_0$, so that $V_n < 1$, we obtain the upper bound 
\begin{align}
& \int _{\{x: N^n (x) \geq 1\}} R(q^n(x,\cdot)\parallel K(x,\cdot))\nu_s^n(dx) \nonumber \\
& \quad \le\frac{1}{n}\frac{1}{V_n}\sum_{i=1}^n\left(\iint_{\left(B_{\varrho^n}(Y_i)\right)^2}\log\frac{1}{a(x,y)}\lambda(dydx)+\int_{B_{\varrho^n}(Y_i)}\log\frac{1}{r(x)}\lambda(dx)\right).\label{ineq:rateFunc_nu_s_n}
\end{align}
From the definition of $\varrho^n$ (see \eqref{def:varrhon}), it holds that $\varrho^n\le a(Y_i,Y_i)$ and $\varrho^n\le\Delta_{\frac{1}{n}}(Y_i)$ for all $i=1,\dots,n$. 
Moreover, the definition of $\Delta_{\frac{1}{n}}$ implies that, for a fixed $i=1,\dots,n$ and $(x,y)\in\left(B_{\varrho^n}(Y_i)\right)^2$,
\begin{align}
    \log\frac{1}{a(x,y)}&= -\log a(x,y) + \log a(Y_i, Y_i) - \log a(Y_i ,Y_i) \nonumber \\
    &< -\log a(Y_i, Y_i ) + \frac{1}{n}\nonumber\\
    &\le -\log \varrho^n + \frac{1}{n}\nonumber\\
    &= -\log \left(C_d V_n^{\frac{1}{d}}\right) + \frac{1}{n} \label{ineq:loga},
\end{align}
for some constant $C_d$ that depends on the dimension $d$ of the space $S\subseteq \mathbb{R}^d$. Similarly, for a fixed $i=1,\dots,n$ and $x\in B_{\varrho^n}(Y_i)$,
\begin{equation}
\label{ineq:logr}
    \log\frac{1}{r(x)}=-\log r(x)\le -\log r(Y_i) + \frac{1}{n}.
\end{equation}
Using the inequalities \eqref{ineq:loga} and \eqref{ineq:logr} in \eqref{ineq:rateFunc_nu_s_n} gives the upper bound
\begin{align*}
    I(\nu_s^n)&\le\frac{1}{n}\frac{1}{V_n}\sum_{i=1}^n\left(V_n^2\left(-\log \left(C_d V_n^{\frac{1}{d}}\right)+\frac{1}{n}\right)+V_n\left(-\log r(Y_i) + \frac{1}{n}\right)\right)\\
    &=-V_n\log\left(C_d V_n^{\frac{1}{d}}\right)+\frac{V_n}{n}+\frac{1}{n}\sum_{i=1}^n\log\frac{1}{r(Y_i)}+\frac{1}{n},
\end{align*}
whenever $n > n_0$. Since $V_n\to0$ by construction, we conclude that
\begin{equation*}
    \lim_{n\to\infty}I(\nu_s^n)\le\lim_{n\to\infty}\frac{1}{n}\sum_{i=1}^n\log\frac{1}{r(Y_i)}=\int_S\log\frac{1}{r(x)}\nu_s(dx)=I(\nu_s)\quad \text{a.s.},
\end{equation*}
where the second-to-last equality follows from the strong law of large numbers, and the last equality is motivated by Lemma \ref{lem:deltaIsKernelForSingularMeasure}.

\end{proof}

\begin{lemma}
    \label{lem:existenceOfCloseMeasure}
    Let $\nu \in \calP (S)$ be such that $I(\nu) < \infty$. Take $\varepsilon>0$ and $\delta>0$. There exists a probability measure $\nu^\dag\in\mathcal{P}(S)$ absolutely continuous with respect to the Lebesgue measure and such that
    \begin{equation*}
        d_{LP}(\nu^\dag,\nu)<\frac{\delta}{2} \quad \text{and}\quad I(\nu^\dag)<I(\nu)+\varepsilon.
    \end{equation*}
\end{lemma}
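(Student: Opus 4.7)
The plan is to apply the Lebesgue decomposition $\nu = (1-p)\nu_\lambda + p\nu_s$ with $\nu_\lambda \ll \lambda$ and $\nu_s \perp \lambda$, and then perturb only the singular part $\nu_s$ using the construction from Lemma~\ref{lem:construction}. By Lemma~\ref{lem:nuLambdaAndSingularInvariantForQ}, $I(\nu) = (1-p)I(\nu_\lambda) + p I(\nu_s)$, so finiteness of $I(\nu)$ implies both $I(\nu_\lambda) < \infty$ and $I(\nu_s) < \infty$, which is exactly what is needed to feed $\nu_s$ into Lemma~\ref{lem:construction}. The trivial cases $p=0$ (take $\nu^\dag = \nu$) and $p=1$ (take $\nu^\dag = \nu_s^n$ for a good $n$) can be dispatched quickly, so I will focus on $p \in (0,1)$.

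Apply Lemma~\ref{lem:construction} to $\nu_s$ to obtain the sequence of random measures $\{\nu_s^n\}$ with $\nu_s^n \ll \lambda$, $\nu_s^n \Rightarrow \nu_s$ almost surely, and $\limsup_n I(\nu_s^n) \le I(\nu_s)$ almost surely. The intersection of the two almost-sure events has full probability, so I may fix a single sample point $\omega$ on which both conclusions hold. For this $\omega$, choose $n$ large enough that simultaneously
\begin{equation*}
 d_{LP}\bigl(\nu_s^n(\omega), \nu_s\bigr) < \tfrac{\delta}{2}
 \qquad \text{and} \qquad
 I\bigl(\nu_s^n(\omega)\bigr) < I(\nu_s) + \tfrac{\varepsilon}{p},
\end{equation*}
and define the deterministic measure $\nu^\dag := (1-p)\nu_\lambda + p\, \nu_s^n(\omega)$. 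Absolute continuity $\nu^\dag \ll \lambda$ is immediate from $\nu_\lambda \ll \lambda$ and $\nu_s^n \ll \lambda$.

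For the L\'evy--Prohorov bound, a short direct check using the definition of $d_{LP}$ shows
\begin{equation*}
 d_{LP}(\nu^\dag, \nu) \le d_{LP}\bigl(\nu_s^n(\omega), \nu_s\bigr) < \tfrac{\delta}{2},
\end{equation*}
since for any closed $A$ and $\eta > d_{LP}(\nu_s^n(\omega), \nu_s)$ one has $\nu^\dag(A) = (1-p)\nu_\lambda(A) + p\,\nu_s^n(\omega)(A) \le (1-p)\nu_\lambda(A^\eta) + p\nu_s(A^\eta) + p\eta \le \nu(A^\eta) + \eta$. For the rate function bound, convexity of $I$ (Lemma~6.10(a) of \cite{BudhirajaAmarjit2019AaAo}, as used in \eqref{eq:ConveityRateFunctionLe}) gives
\begin{equation*}
 I(\nu^\dag) \le (1-p) I(\nu_\lambda) + p\, I\bigl(\nu_s^n(\omega)\bigr) < (1-p) I(\nu_\lambda) + p\, I(\nu_s) + \varepsilon = I(\nu) + \varepsilon,
\end{equation*}
where the last equality is the decomposition from Lemma~\ref{lem:nuLambdaAndSingularInvariantForQ}\ref{itm:rateFcnDecomposition}.

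The main obstacle is conceptual rather than technical: one must recognise that Lemma~\ref{lem:construction} only handles the singular part and that the absolutely continuous part $\nu_\lambda$ should be left untouched, and then invoke the two decomposition tools (convexity of $I$ and the additive formula \eqref{eq:decompositionOfRateFunction}) in the right direction --- convexity for the upper bound on $I(\nu^\dag)$, and the equality for re-assembling $I(\nu)$. Everything else is a matter of choosing a good sample point and a sufficiently large index $n$.
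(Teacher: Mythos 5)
Your proof is correct and follows essentially the same route as the paper: decompose $\nu$ via \eqref{eq:decomposition}, approximate the singular part with the measures $\nu_s^n$ from Lemma~\ref{lem:construction}, recombine, and use convexity of $I$ together with \eqref{eq:decompositionOfRateFunction}. The only (harmless) differences are that you make explicit the verification that $I(\nu_s)<\infty$ before invoking Lemma~\ref{lem:construction}, and you bound $d_{LP}(\nu^\dag,\nu)$ by a direct computation rather than via weak convergence of the mixture.
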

\begin{proof}
First, if $\nu \ll \lambda$ there is nothing to prove. Therefore, suppose this does not hold and the decomposition \eqref{eq:decomposition} is non-trivial.

    Sample $\{Y_i\}_{i=1}^\infty$ i.i.d.\ $\nu_s$ and define the sequence of random probability measures $\{\nu_s^n\}_{n\in\mathbb{N}}$ as in the construction in Lemma~\ref{lem:construction}. Motivated by the decomposition \eqref{eq:decomposition} for $\nu$, we define a new sequence of random probability measures $\{\nu^n\}_{n\in\mathbb{N}}$ by
\begin{equation*}
    \nu^n = (1-p)\cdot\nu_\lambda + p\cdot \nu_s^n,
\end{equation*}
where $p \in [0,1]$ is the same as in \eqref{eq:decomposition}, again suppressing in the notation that $p$ depends on $\nu$. By part \ref{item:abscont} of Lemma \ref{lem:construction}, $\nu ^n _s \ll \lambda$ for all $n$. It follows that $\nu^n\ll\lambda$. Moreover, from part \ref{item:weakconv} of the same Lemma, $\nu ^n$ converges weakly to $\nu$ $\nu_s$-a.s. Therefore, for any $\omega \in \Omega$ outside of a $\nu_s$-null set, there is an $N_{\delta} = N_\delta (\omega) \in \bN$ such that 
\[
    d_{LP} (\nu ^n(\omega), \nu) < \frac{\delta}{2}, \ \ \forall n \geq N_\delta (\omega).
\]

Consider now $I(\nu ^n)$. By convexity,
\begin{equation*}
    I(\nu^n)\le (1-p)\cdot I(\nu_\lambda)+p\cdot I(\nu_s^n),
\end{equation*}
for which the right-hand-side is finite w.p.\ 1 whenever $n \geq n_0$. Combined with part \ref{item:convRate} of Lemma \ref{lem:construction}, this yields that, $\nu_s$-a.s.,
\begin{equation*}
    \lim_{n\to\infty}I(\nu^n) \leq (1-p)I(\nu_\lambda)+p\cdot I(\nu_s)=I(\nu). 
\end{equation*}
Similar to before, this implies that for any $\omega \in \Omega$ outside of a $\nu_s$-null set, there is a $N_\varepsilon = N_\varepsilon (\omega) \in \bN$, such that
\[
    I (\nu ^n (\omega)) < I (\nu) + \varepsilon, \ \ \forall n \geq N_\varepsilon (\omega).
\]
As a consequence, for any $\omega \in \Omega$ outside of a null set, we can define 
\[
N(\omega) = \max\{N_\delta (\omega),N_\varepsilon (\omega), n_0\}.
\]
Then, for $n \geq N(\omega)$, $\nu ^n(\omega) \ll \lambda$,
$d_{LP}(\nu ^n (\omega), \nu) < \delta/2$, and $I(\nu ^n (\omega)) < I(\nu) + \varepsilon$. Since this is outside a $\nu_s$-null set, it has positive probability also under $\nu$. This proves the existence of a measure $\nu ^\dag$ with the claimed properties. 
\end{proof}
We emphasise that the randomness of the sequence $\{\nu^n\}$ is entirely due to the sequence of random variables $\{Y_i\}_{i=1}^\infty$. Thus, the set of outcomes of $\{Y_i\}$ that lead to a measure $\nu^n$ with the desired properties is a set with strictly positive probability. 
This guarantees the existence of a measure $\nu ^\dag$ with the claimed properties. The following result is a version of Lemma 6.17 in \cite{BudhirajaAmarjit2019AaAo}. 
\begin{lemma}
    \label{lem:existenceOfNu*}
   Let $\nu\in\mathcal{P}(S)$ satisfy $I(\nu)<\infty$.  Under \ref{ass:targetAbsContLambda}-\ref{ass:proposalDistributionAbsCont}, for given $\varepsilon>0$ and $\delta>0$, there exists $\nu^*\in\mathcal{P}(S)$ with the following properties:
    \begin{enumerate}[label=(\alph*)]
        \item \label{existence:LP}$d_{LP}(\nu^*,\nu)<\delta$; 
        \item \label{existence:AC} $\nu^*\ll\pi$ and $\pi\ll\nu^*$;
        \item \label{existence:Rate}there exists a transition probability function $q^*(x,dy)$ on $S$ such that $\nu^*$ is an invariant measure of $q^*(x,dy)$, the
associated Markov chain is ergodic, and 
\begin{equation}
\label{ineq:boundRateFuncnu*}
    I(\nu^*)\le I(\nu)+\varepsilon.
\end{equation}
    \end{enumerate}
\end{lemma}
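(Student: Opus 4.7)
The plan is to leverage Lemma \ref{lem:existenceOfCloseMeasure} to handle properties \ref{existence:LP} and the absolute continuity half of \ref{existence:AC}, and then to regularize by mixing with the target distribution $\pi$ to obtain the mutual absolute continuity in \ref{existence:AC} and to build an ergodic kernel in \ref{existence:Rate}. Concretely, first I would apply Lemma \ref{lem:existenceOfCloseMeasure} (with buffer $\delta/2$) to produce $\nu^\dag\in\mathcal{P}(S)$ satisfying $\nu^\dag\ll\lambda$, $d_{LP}(\nu^\dag,\nu)<\delta/2$, and $I(\nu^\dag)<I(\nu)+\varepsilon$, and let $q^\dag$ be an associated minimizing kernel from Lemma \ref{lem:transitionKernelForm}. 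Then, for a small parameter $\theta\in(0,1)$ to be chosen, define
\[
\nu^* = (1-\theta)\,\nu^\dag + \theta\,\pi.
\]

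Parts \ref{existence:LP} and \ref{existence:AC} follow quickly. Since $\nu^\dag\ll\lambda$ and $\pi\sim\lambda$ by \ref{ass:targetAbsContLambda}, we get $\nu^*\ll\pi$; the lower bound $\nu^*\ge \theta\pi$ yields $\pi\ll\nu^*$. The triangle inequality together with $d_{LP}\le\|\cdot\|_{TV}$ gives $d_{LP}(\nu^*,\nu^\dag)\le 2\theta$, so choosing $\theta<\delta/4$ yields $d_{LP}(\nu^*,\nu)<\delta$.

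For \ref{existence:Rate}, I would construct $q^*$ via the disintegration
\[
\nu^*(dx)\,q^*(x,dy) \;=\; (1-\theta)\,\nu^\dag(dx)\,q^\dag(x,dy) \;+\; \theta\,\pi(dx)\,K(x,dy).
\]
A direct computation of the two marginals of the right-hand side shows that $\nu^*$ is invariant for $q^*$. Using joint convexity of relative entropy together with the chain rule \eqref{eq:chainRuleCorollary},
\[
I(\nu^*)\;\le\;R(\nu^*\otimes q^*\parallel \nu^*\otimes K)\;\le\;(1-\theta)\,R(\nu^\dag\otimes q^\dag\parallel \nu^\dag\otimes K)\;+\;\theta\,R(\pi\otimes K\parallel \pi\otimes K),
\]
whose right-hand side equals $(1-\theta)I(\nu^\dag)\le I(\nu^\dag)<I(\nu)+\varepsilon$, giving \eqref{ineq:boundRateFuncnu*}.

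The main obstacle is the ergodicity claim in \ref{existence:Rate}. The key observation is that, by construction, $q^*(x,\cdot)\ge \frac{\theta\,\pi(x)}{f^*(x)}\,K(x,\cdot)$ for $\nu^*$-a.e.\ $x$, where $f^*=d\nu^*/d\lambda$. Hence $q^*$ inherits indecomposability from $K$: if two disjoint Borel sets $A_1,A_2$ were absorbing for $q^*$, the above lower bound would force $K(x,A_2)=0$ for $\nu^*$-a.e.\ $x\in A_1$ (and symmetrically), which contradicts the indecomposability of $K$ established in Remark \ref{rmk:KhasAlawaysAcceptancePart} since $\pi(x)>0$ on $S_+$ and $\nu^*$ puts all its mass there. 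Applying Theorem~7.16 in \cite{Breiman} as in Remark \ref{rmk:KhasAlawaysAcceptancePart} then yields that $\nu^*$ is the unique invariant distribution for $q^*$ and the associated Markov chain is ergodic, completing the proof.
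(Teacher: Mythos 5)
Your construction coincides with the paper's: the same $\nu^\dag$ from Lemma~\ref{lem:existenceOfCloseMeasure}, the same convex mixture with $\pi$ (the paper simply fixes $\theta=\delta/4$), the same disintegration of $(1-\theta)\,\nu^\dag\otimes q^\dag+\theta\,\pi\otimes K$ to define $q^*$, and the same convexity-of-relative-entropy bound for $I(\nu^*)$. Parts \ref{existence:LP}, \ref{existence:AC} and the inequality \eqref{ineq:boundRateFuncnu*} are handled exactly as in the paper and are correct.

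The ergodicity step, however, has a gap as written. First, the contradiction is drawn from the wrong quantity: from $q^*(x,A_1)=1$ and the minorization $q^*(x,\cdot)\ge c(x)K(x,\cdot)$ with $c(x)>0$ you should conclude $K(x,S\setminus A_1)=0$, i.e.\ $K(x,A_1)=1$, for $\nu^*$-a.e.\ $x\in A_1$; the statement $K(x,A_2)=0$ that you derive does not by itself contradict indecomposability of $K$, since $K(x,\cdot)$ could put mass on $(A_1\cup A_2)^c$. Second, and more substantively, the minorization holds only for $\nu^*$-a.e.\ $x$, because the disintegration determines $q^*(x,\cdot)$ only up to $\nu^*$-null sets, whereas the notion of indecomposability fed into Theorem~7.16 of \cite{Breiman} (see Remark~\ref{rmk:KhasAlawaysAcceptancePart}) is pointwise in $x$: if one of the putative absorbing sets is $\nu^*$-null, your argument says nothing about it. The paper closes this by showing that $q^*(x,\cdot)$ and $K(x,\cdot)$ are in fact equivalent for $\pi$-a.e.\ $x$ (the reverse inclusion $q^*(x,\cdot)\ll K(x,\cdot)$ coming from the finiteness of $\int R(q^*(x,\cdot)\parallel K(x,\cdot))\nu^*(dx)$) and then redefining $q^*(x,\cdot):=K(x,\cdot)$ on the exceptional $\nu^*$-null set; this preserves invariance of $\nu^*$ and makes $q^*$ pointwise equivalent to $K$, so indecomposability transfers verbatim. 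With that redefinition and the corrected deduction, your argument goes through.
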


\begin{proof}
   To prove \ref{existence:LP}, by Lemma~\ref{lem:existenceOfCloseMeasure}, there exists a measure $\nu^\dag$ that satisfies
    \begin{equation}
    \label{ineq:nuNandRateFuncCloseToLim}
        d_{LP}(\nu^\dag,\nu)<\frac{\delta}{2}\quad\text{and}\quad I(\nu^\dag)<I(\nu)+\varepsilon
    \end{equation}
    Define $\nu^*$ by
    \begin{equation}
        \label{def:nu*}
        \nu^*=\left(1-\frac{\delta}{4}\right)\nu^\dag+\frac{\delta}{4}\pi.
    \end{equation}
 Then,
    \begin{equation*}
    d_{LP}(\nu^*,\nu^\dag)\le\|\nu^*-\nu^\dag\|_{TV}=\Bigg\lVert\left(1-\frac{\delta}{4}\right)\nu^\dag+\frac{\delta}{4}\pi-\nu^\dag\Bigg\rVert_{TV}=\frac{\delta}{4}\|\pi-\nu^\dag\|_{TV}\le \frac{\delta}{2}.
\end{equation*}
Combining this with \eqref{ineq:nuNandRateFuncCloseToLim} and the triangle inequality now yields the desired upper bound on $d_{LP} (\nu ^*, \nu)$,
\begin{equation*}
    d_{LP}(\nu^*,\nu)\le d_{LP}(\nu^\dag,\nu)+d_{LP}(\nu^*,\nu^\dag)<\delta.
\end{equation*}

\ref{existence:AC}. The first part of  follows from $\nu ^\dag \ll \lambda$ (see Lemma \ref{lem:existenceOfCloseMeasure}) and the fact that, by Assumption~\ref{ass:targetAbsContLambda}, $\lambda \ll \pi$. For the second part, for any $A\in\mathcal{B}(S)$, $\nu^*(A)\ge\frac{\delta}{4}\pi(A)$ by construction. Thus, $\pi\ll\nu^*$. 

We now prove part \ref{existence:Rate}, following the steps in \cite[Lemma 6.17]{BudhirajaAmarjit2019AaAo}. Since $I(\nu ^\dag) < \infty$, by Lemma~6.8(b) in \cite{BudhirajaAmarjit2019AaAo}, we can choose a transition kernel $q(x,dy)$ with invariant measure $\nu^\dag$ and
\begin{equation*}
    \int_S R(q(x,\cdot)\parallel K(x,\cdot))\nu^\dag(dx)=I(\nu^\dag).
\end{equation*}
Define the $\gamma ^\dag$, $\theta$ and $\gamma ^*$ in $\mathcal{P}(S^2)$ by,
\begin{align*}
    &\gamma^\dag = \nu^\dag\otimes q,\\
    &\theta = \pi\otimes K,
   \end{align*}
   and
   \begin{align*}
    &\gamma^* = \left(1-\frac{\delta}{4}\right)\gamma^\dag+\frac{\delta}{4}\theta.
\end{align*}
Both marginals of $\gamma ^\dag$ equal $\nu ^\dag$. Similarly, both marginals of $\theta$ equal $\pi$. From \eqref{def:nu*} it then follows that both marginals of $\gamma ^*$ equal $\nu ^*$. From Lemma~6.8(a) in~\cite{BudhirajaAmarjit2019AaAo}, there exists a transition kernel $q^*(x,dy)$ that has $\nu^*$ as invariant probability distribution and such that $\gamma^*=\nu^*\otimes q^*$. Using the convexity of relative entropy, the property $R(\alpha\parallel\alpha)=0$ and \eqref{ineq:nuNandRateFuncCloseToLim}, we obtain the upper bound \eqref{ineq:boundRateFuncnu*}:
\begin{align*}
    I(\nu^*)&\le\int_SR(q^*(x,\cdot)\parallel K(x,\cdot))\nu^*(dx)\\
    &=R(\gamma^*\parallel\nu^*\otimes K)\\
    &=R\left(\left(1-\frac{\delta}{4}\right)\nu^\dag\otimes q+\frac{\delta}{4}\pi\otimes K\Big\Vert\left(1-\frac{\delta}{4}\right)\nu^\dag\otimes K+\frac{\delta}{4}\pi\otimes K\right)\\
    &=\left(1-\frac{\delta}{4}\right)R(\nu^\dag\otimes q \parallel \nu^\dag \otimes K) + \frac{\delta}{4} R(\pi\otimes K\parallel \pi\otimes K) \\
    &=\left(1-\frac{\delta}{4}\right) I(\nu^\dag)\\
    &< I(\nu)+\varepsilon.
\end{align*}

It remains to show that the Markov process associated with $q^*$ is ergodic. Let $f = \frac{d \nu^*}{d\pi}$ be the Radon-Nikodym derivative of $\nu^*$ with respect to $\pi$, which is well-defined by part \ref{existence:AC}. Since $\nu^*(A)\ge\frac{\delta}{4}\pi(A)$ for all $A\in\mathcal{B}(S)$, for all $x \in S$, $f(x)\ge \frac{\delta}{4}$. We observe that for any $A,B\in\mathcal{B}(S)$,
\begin{equation*}
    \gamma^*(A\times B) = \int_{A}q^*(x,B)\nu^*(dx)=\int_{A}q^*(x,B)f(x)\pi(dx),
\end{equation*}
and, from the definition of $\gamma ^*$,
\begin{equation*}
    \gamma^*(A\times B) \ge \frac{\delta}{4}\theta(A\times B) = \frac{\delta}{4}\int_A K(x,B)\pi(dx).
\end{equation*}
It follows that 
\begin{equation*}
    q^*(x,B)\ge \frac{\delta}{4f(x)}K(x,B),\quad \forall x,\,\pi-\text{a.s.},
\end{equation*}
for all $B\in\mathcal{B}(S)$. Thus, $\pi$-.a.s.\ for $x \in S$, $K(x,\cdot) \ll q^*(x,\cdot)$. To show absolute continuity in the reverse direction, note that from
\begin{equation*}
    \int_SR(q^*(x,\cdot)\parallel K(x,\cdot))\nu^*(dx)<\infty,   
\end{equation*}
it follows that $R(q^*(x,\cdot)\parallel K(x,\cdot))<\infty$. Thus, $q^*(x,\cdot)\ll K(x,\cdot)$, $\nu^*$-a.s. As $\nu^*$ and $\pi$ are equivalent measures, we obtain that $q^*(x,\cdot)$ and $K(x,\cdot)$ are equivalent $\pi$-a.s. This means that there exists a Borel set $C\in\mathcal{B}(S)$ such that $\pi(C)=0=\nu^*(C)$, and $q^*(x,\cdot)$ and $K(x,\cdot)$ are equivalent for all $x$ in the complement of $C$. If we redefine $q^*(x,\cdot)=K(x,\cdot)$ for all $x\in C$, we obtain the equivalence between $q^*(x,dy)$ and $K(x,\cdot)$ for all $x\in S$. Besides, being $\nu^*(C)=0$, the newly defined $q^*(x,\cdot)$ still has $\nu^*$ as invariant measure. To show that $q^*(x,\cdot)$ is ergodic, recall that in Remark~\ref{rmk:KhasAlawaysAcceptancePart} we proved that there are no disjoint Borel sets $A_1,A_2\in\mathcal{B}(S)$ such that
 \begin{equation*}
        K(x,A_1)=1\;\forall x\in A_1\quad\text{and}\quad K(y,A_2)=1\;\forall y\in A_2.
\end{equation*}
Because $q^*(x,\cdot)$ and $K(x,\cdot)$ are equivalent for all $x\in S$, it follows that also $q^*(x,\cdot)$ satisfies the property that there do not exist disjoint $A_1,A_2\in\mathcal{B}(S)$ for which
 \begin{equation*}
        q^*(x,A_1)=1\;\forall x\in A_1\quad\text{and}\quad q^*(y,A_2)=1\;\forall y\in A_2,
\end{equation*}
meaning that $q^*(x,\cdot)$ is indecomposable. Therefore, by Theorem 7.16 in~\cite{Breiman}, $\nu^*$ is the unique invariant distribution for $q^*(x,dy)$ and the Markov chain associated with $\nu^*$ and $q^*(x,dy)$ is ergodic.
\end{proof}

We are ready to prove Proposition \ref{lemma:lower}, the Laplace principle lower bound. The following proof is mostly based on the proof of Proposition~6.15 in \cite{BudhirajaAmarjit2019AaAo},
with minor changes due to the lack of Condition \ref{cond:DE}. The main work has been done in Lemmas \ref{lem:deltaIsKernelForSingularMeasure}-\ref{lem:existenceOfNu*}, and most of the proof from \cite{BudhirajaAmarjit2019AaAo} now goes through, with some minor modifications to rely on those results rather than Condition \ref{cond:DE}. We include the full argument for self-containment and convenience for the reader.

\begin{proof}[Proof of Proposition \ref{lemma:lower}]
To prove the Laplace lower bound \eqref{ineq:LaplaceLowerBound}, it is sufficient to consider only bounded Lipschitz continuous functions $F$ (see Corollary~1.10 in~\cite{BudhirajaAmarjit2019AaAo}). Since we have endowed $\calP (S)$ with the L\'evy-Prohorov metric $d_{LP}$, a function $F \in C_b(\calP(S))$ is Lipschitz if
    \begin{equation*}
        \sup_{\nu_1\neq\nu_2}\frac{\left|F(\nu_1)-F(\nu_2)\right|}{d_{LP}(\nu_1,\nu_2)}<\infty.
    \end{equation*}

Recall that $X = \{X_i\}_{i\geq 0}$ denotes the Metropolis-Hastings chain, as described in Section \ref{sec:MH}, and $\{ L^n \}_n$ the associated sequence of empirical measures. We now construct a nearly optimal sequence of controls in variational representation \eqref{eq:rep},
\begin{align}
    -\frac{1}{n} \log \bE \left[ e^{-nF(L^n)} \right] &= \inf _{\{ \bar \mu ^n _i \} } \bE \left[ F(\bar L ^n) + \frac{1}{n} \sum_{i=1} ^n R(\bar \mu ^n _i \parallel K(\bar X ^n _i, \cdot)) \right]. 
\end{align}

Let $\varepsilon>0$ be given and choose $\nu\in\mathcal{P}(S)$ such that 
\begin{equation}
\label{ineq:FPlusICloseToInf}
    F(\nu)+I(\nu)\le \inf_{\mu\in\mathcal{P}(S)}\left[F(\mu)+I(\mu)\right] + \varepsilon <\infty
\end{equation}
Since $F$ is continuous, there exists $\delta>0$ such that $d_{LP}(\mu,\nu)<\delta$ implies $\left|F(\mu)-F(\nu)\right|<\varepsilon$. In Lemma~\ref{lem:existenceOfNu*} it is shown that, for any such pair $\delta, \varepsilon$, there exists a probability measure $\nu^*\in\mathcal{P}(S)$ and a transition probability $q^*(x,dy)$ such that $\nu^*$ is an invariant measure for $q^*(x,dy)$, the Markov chain with initial distribution $\nu^*$ and transition probability $q^*(x,dy)$ is ergodic, besides
\begin{equation}
\label{ineq:rateFuncNu*Bound}
    I(\nu^*) \leq \int_SR(q^*(x,\cdot)\parallel K(x,\cdot))\nu^*(dx) \le I(\nu) +\varepsilon<\infty. 
\end{equation}
Moreover, Part \ref{existence:LP} of the Lemma ensures $d_{LP}(\nu^*,\nu)<\delta$, which then implies
\begin{equation}
\label{ineq:FPlusEps}
    F(\nu^*)\le F(\nu)+\varepsilon.
\end{equation}
Thus, $\nu ^*$ is such that
\[
    F(\nu ^*) + I(\nu ^*) \leq F(\nu) + I(\nu) + 2\varepsilon.
\]
The transition probability function $q^*$ associated with $\nu ^*$ is now used to define the controls, 
\begin{equation}
\label{eq:control}
    \bar\mu_i^n(dy)=q^*(\bar X_{i-1}^n,dy), \ \ i =1, \dots, n.
\end{equation}
With the inequalities \eqref{ineq:rateFuncNu*Bound}-\eqref{ineq:FPlusEps} established, and the choice \eqref{eq:control} for the controls, we can proceed with the same arguments as in the proof of Proposition 6.15 in \cite{BudhirajaAmarjit2019AaAo}. 

With the choice \eqref{eq:control}, the running costs for the controlled chain $\bar X ^n$ are
 \begin{equation*}
     \frac{1}{n}\sum_{i=0}^{n-1}R(\bar\mu_i^n(\cdot)\parallel K(\bar X_i^n,\cdot))=\frac{1}{n}\sum_{i=0}^{n-1}R(q^*(\bar X_i^n,\cdot)\parallel K(\bar X_i^n,\cdot)).
 \end{equation*}
The $\bar \mu ^n _i$s only give the conditional distributions for $\bar X^n _i$ for $i=1, \dots, n$. 
For the distribution of the initial point $\bar X^n _0$, consider two choices: $\delta _x$ and $\nu ^*$. Let $\bP_x$ and $\bP^*$ denote the corresponding probability measures and let $\mathbb{E}_x$ and $\mathbb{E}^*$ be the associated expectation, respectively. Define $D^n$ and $D^n _x$ as the expected difference between the empirical average of the relative entropy between $q^*$ and $K$, and its mean, under $\bP^*$ and $\bP_x$, respectively,
\begin{equation*}
D^n = \mathbb{E}^*\left[\left|\frac{1}{n}\sum_{i=0}^{n-1}R(q^*(\bar X_i^n,\cdot)\parallel K(\bar X_i^n,\cdot))-\int_SR(q^*(\xi,\cdot)\parallel K(\xi,\cdot))\nu^*(d\xi)\right|\right],
\end{equation*}
and 
\begin{equation*}
D^n_x = \mathbb{E}_x\left[\left|\frac{1}{n}\sum_{i=0}^{n-1}R(q^*(\bar X_i^n,\cdot)\parallel K(\bar X_i^n,\cdot))-\int_SR(q^*(\xi,\cdot)\parallel K(\xi,\cdot))\nu^*(d\xi)\right|\right].
\end{equation*}

From the definition of the controls \eqref{eq:control}, and since $\nu^*$ is an invariant measure of $q^*(x,dy)$, all terms of the controlled process $\{\bar X_i^n \}_{i=0}^n$ are distributed according to $\nu^*$. By the non-negativity of the relative entropy and $R(\cdot\parallel\cdot)$ and \eqref{ineq:rateFuncNu*Bound}, we obtain
\begin{align*}
    \mathbb{E}^*\left[\left|R(q^*(\bar X_i^n,\cdot)\parallel K(\bar X_i^n,\cdot))\right|\right]&
    =\int_SR(q^*(\xi,\cdot)\parallel K(\xi,\cdot))\nu^*(d\xi) 
    \le I(\nu)+\varepsilon<\infty.
\end{align*}
The $L^1$-ergodic theorem~\cite[Corollary 6.25]{Breiman} 
then gives
\begin{equation*}
    \lim_{ n\to\infty}D^n=0.
\end{equation*}
Moreover, note that $D^n=\int_SD^n_x\nu^*(dx)$. Therefore, the convergence of $D^n$ also implies that 
\begin{equation*}
    \lim_{n\to\infty}\int_SD^n_x\nu^*(dx)=0.
\end{equation*}
Convergence in probability of $D^n _x$ to 0 now follows from Chebyshev's inequality: for any $c>0$,
\begin{equation*}
    \lim_{n\to\infty}\nu^*\{x\in S \,:\,D_x^n\ge c\} \leq \lim _{n \to \infty} \frac{1}{c} \int _{\{x:  D^n _x \geq c \}} D^n _x \nu ^*(dx) \leq \frac{1}{c} \lim _{n \to \infty} D^n =0.
\end{equation*}
From this convergence in probability, for every subsequence of $\{ n\}$ there is a further subsequence, which we also denote by $\{ n \}$, such that the convergence is w.p.\ 1. That is, there is a Borel set $\Phi _1$ with $\nu ^*(\Phi_1) =1$, such that along such (sub)subsequences and for all $x \in \Phi _1$, 
\begin{equation}
\label{eq:limRunningCost}
    \lim_{n\to\infty}\mathbb{E}_x\left|\frac{1}{n}\sum_{i=0}^{n-1}R(q^*(\bar X_i^n,\cdot)\parallel K(\bar X_i^n,\cdot))-\int_SR(q^*(\xi,\cdot)\parallel K(\xi,\cdot))\nu^*(d\xi)\right|=0.
\end{equation}
Abusing notation, we now fix such a subsubsequence $\{ n \}$. The previous argument show the a.s.\ convergence of the running costs and we now consider the corresponding sequence of controlled empirical measures $\{\bar L ^n\}$. Because $S \subset \bR ^d$, there is a countable convergence-determining class $\Xi \subset C_b (S)$ (see e.g.\ Appendix A in \cite{BudhirajaAmarjit2019AaAo}). For each $g\in\Xi$, we define the set
\begin{equation*}
    \calA (g) = \left\{\omega\in\Omega\,:\,\lim_{n\to\infty}\frac{1}{n}\sum_{i=0}^{n-1}g(\bar X_i^n(\omega))=\int_Sg(x) \nu^* (dx)\right\}.
\end{equation*}
By the pointwise ergodic theorem~\cite[Sect. 6.5]{Breiman}, 
\begin{equation*}
    \bP^*\left\{\calA(g)\right\}=1.
\end{equation*}
Observing that $\bP^*\left\{\calA(g)\right\}=\int_S \bP_x\left\{\calA(g)\right\}\nu^*(dx)$, we obtain
\begin{equation*}
    \int_S \bP_x\left\{\calA(g)\right\}\nu^*(dx)=1.
\end{equation*}
This implies that $\bP_x\left\{\calA(g)\right\}=1$ a.s., i.e., there exists a Borel set $\Phi_2(g)\in\mathcal{B}(S)$ with $\nu^*(\Phi_2(g))=1$ and such that $\bP_x\left\{\calA(g)\right\}=1$ for $x\in\Phi_2(g)$. 

To establish the convergence of $\bar L ^n$, we define $\Phi_2 = \cap_{g\in\Xi}\Phi_2(g)$. Since $\Xi$ is countable, $\Phi _2$ satisfies $\nu^*(\Phi_2)=1$. Then, for all initial points $\bar X^n _0 = x\in\Phi_2$,
\begin{equation*}
    \lim_{n\to\infty}\int_S g\,d\bar L^n=\lim_{n\to\infty}\frac{1}{n}\sum_{i=0}^{n-1}g(\bar X_i^n)=\int_Sg\,d\nu^*,
\end{equation*}
$\bP_x-$a.s.\ for all $g\in\Xi$. Because $\Xi$ a convergence determining class, it follows that $\bar L^n\Rightarrow\nu^*$ $\bP_x-$a.s. for all $x\in\Phi_2$. From the continuity of $F$ on $\mathcal{P}(S)$, we then have
\begin{equation}
\label{eq:limFL}
    \lim_{n\to\infty}F(\bar L^n)=F(\nu^*),
\end{equation}
for all $x\in\Phi_2$. 

We now combine the arguments for the running costs and the controlled empirical measures to show the Laplace principle lower bound on a set of $\nu ^*$-measure 1. Define the set $\Phi=\Phi_1\cap\Phi_2 \subset S$. Since $\nu^*(\Phi)=\nu^*(\Phi_2)=1$, we have $\nu ^*(\Phi ) =1$. For all $x \in \Phi$, both \eqref{eq:limRunningCost} and \eqref{eq:limFL} hold, and
\begin{align*}
    \limsup_{n\to\infty}-\frac{1}{n}\log\mathbb{E}e^{-nF(L^n)}&\le\lim_{n\to\infty}\mathbb{E}_x\left[F(\bar L^n)+\frac{1}{n}\sum_{i=0}^{n-1}R(q^*(\bar X_i^n,\cdot)\parallel K(\bar X_i^n,\cdot))\right]\\
    &=F(\nu^*)+\int_SR(q^*(\xi,\cdot)\parallel K(\xi,\cdot))\nu^*(d\xi)\\
    &\le F(\nu)+I(\nu)+2\varepsilon\\
    &\le \inf_{\mu\in\mathcal{P}(S)}\left[F(\mu)+I(\mu)\right]+3\varepsilon,
\end{align*}
where the inequality on the third line comes from \eqref{ineq:rateFuncNu*Bound} and \eqref{ineq:FPlusEps}, while the inequality on the last line follows from \eqref{ineq:FPlusICloseToInf}. By taking the limit $\varepsilon\to0$ we obtain the upper bound \eqref{ineq:LaplaceLowerBound} for all $x\in\Phi$.

We conclude the proof by extending this result from $\Phi$ to the whole space $S$. Whereas \cite{BudhirajaAmarjit2019AaAo} relies on the transitivity condition \eqref{eq:transCond} for this extension, we instead rely on the properties of the MH kernel; this requires only minor changes in the argument.

By Lemma~\ref{lem:existenceOfNu*},  $\nu^*$ and $\pi$ are equivalent, thus $\nu^*(\Phi)=1$ implies $\pi (\Phi) =1$.  Moreover, $\pi$ and $\lambda$ are equivalent measures by Assumption \ref{ass:targetAbsContLambda}, and we have
\begin{equation*}
    a(x,\Phi)=\int_\Phi a(x,y)dy=\int_S a(x,y)dy=a(x,S),
\end{equation*}
for all $x\in S$. As a consequence, $K(x,\Phi)\ge a(x,\Phi)=a(x,S)$, which is strictly positive for all $x\in S$ (see Remark~\ref{rmk:KhasAlawaysAcceptancePart}). It follows that
\begin{equation}
\label{ineq:KinPhipositive}
    K(x,\Phi)>0,\qquad \forall x\in S.
\end{equation}

Define $\tilde L ^n$ as the empirical measure of $X_1, \dots , X_n$,
\begin{equation*}
    \tilde L^n=\frac{1}{n}\sum_{i=1}^n\delta_{X_i}.
\end{equation*}
Since $L^n $ and $\tilde L ^n$ only differ in the first and last summands,
\begin{equation*}
    \lVert\tilde L^n -L^n\rVert_{TV}\le\frac{2}{n}.
\end{equation*}
Let $L_F<\infty$ denote the Lipschitz constant of $F$ with respect to the L\'{e}vy-Prohorov metric. For all $\omega\in\Omega$,
\begin{equation*}
    F(L^n)\le F(\tilde L^n)+ L_F \cdot d_{LP}(L^n,\tilde L^n)\le F(\tilde L ^n) + L_F \norm{L^n - \tilde L^N} _{TV} \le F(\tilde L^n)+\frac{2L_F}{n}.
\end{equation*}
Take any $x\in S$ and $n\in\mathbb{N}$. Since the $X^n _i$s evolve according to $K$, using the previous inequality we have,
\begin{align}
    \mathbb{E}_x\left[e^{-nF(L^n)}\right]&\ge e^{-2L_F} \mathbb{E}_x\left[e^{-nF(\tilde L^n)}\right]\nonumber\\
    &=e^{-2L_F}\int_S\mathbb{E}\left[e^{-nF(\tilde L^n)}\mid X_1=y\right]\,K(x,dy)\nonumber\\
    &=e^{-2L_F}\int_S\mathbb{E}_y\left[e^{-nF( L^n)}\right]\,K(x,dy)\nonumber\\
    &\ge  e^{-2L_F}\int_\Phi\mathbb{E}_y\left[e^{-nF(\tilde L^n)}\right]\,K(x,dy),\label{ineq:extendLowerBound}
\end{align}
where the equality on the third line is due to the Markov property. With this lower bound established, from here we can again follow the proof of Proposition 6.15 in \cite{BudhirajaAmarjit2019AaAo}. Let $\varepsilon>0$ be fixed. We have that \eqref{ineq:LaplaceLowerBound} holds for all $y\in\Phi$, why for each such $y$ there exists an $N(y,\varepsilon)\in\mathbb{N}$ such that 
\begin{equation}
\label{ineq:lowerBoundForY}
    -\frac{1}{n}\log\mathbb{E}_y\left[e^{-nF(L^n)}\right]\le \inf_{\mu\in\mathcal{P}(S)}\left[F(\mu)+I(\mu)\right]+\varepsilon
\end{equation}
for all $n\ge N(y,\varepsilon)$. Without loss of generality, take $N(y,\varepsilon)$ as the smallest integer with this property. Then, the function $S\to\mathbb{N}$ that maps $y$ into $N(y,\varepsilon)$ is measurable, the sets 
\begin{equation*}
    \Phi^{(i)} = \{y\in\Phi \,:\, N(y,\varepsilon)=i\}\subset S
\end{equation*}
are disjoint Borel sets, and $\Phi=\cup_{i=1}^\infty\Phi^{(i)}$.

Because $K(x, \Phi) > 0$ for all $x \in S$ (see \eqref{ineq:KinPhipositive}), we have that for all $x\in S$ there exists an $i_0\in\mathbb{N}$ such that $K(x,\Phi^{(i_0)})>0$. Combined with the bounds in \eqref{ineq:extendLowerBound}, and  \eqref{ineq:lowerBoundForY}, this implies that for all $n\ge i_0$,
\begin{align*}
    \mathbb{E}_x\left[e^{-nF(L^n)}\right]&\ge e^{-2L_F}\int_\Phi\mathbb{E}_y\left[e^{-nF(\tilde L^n)}\right]\,K(x,dy)\\
    &\ge e^{-2L_F}\int_{\Phi^{(i_0)}}\mathbb{E}_y\left[e^{-nF(\tilde L^n)}\right]\,K(x,dy)\\
    &\ge e^{-2L_F} \exp\left\{-n\left(\inf_{\mu\in\mathcal{P}(S)}[F(\mu)+I(\mu)]+\varepsilon\right)\right\} K(x,\Phi^{(i_0)}).
\end{align*}
It follows that
\begin{align*}
    \limsup_{n\to\infty}-\frac{1}{n}\log\mathbb{E}_x \left[e^{-nF(L^n)} \right] &\le \inf_{\mu\in\mathcal{P}(S)}[F(\mu)+I(\mu)]+\varepsilon+\lim_{n\to\infty}\frac{2L_F-\log K(x,\Phi^{(i_0)})}{n} \\
    &= \inf_{\mu\in\mathcal{P}(S)}[F(\mu)+I(\mu)]+\varepsilon.
\end{align*}
In the limit $\varepsilon\to0$, we have for all $x\in S$,
\begin{equation*}
    \limsup_{n\to\infty}-\frac{1}{n}\log\mathbb{E}_x\left[ e^{-nF(L^n)} \right]\le\inf_{\mu\in\mathcal{P}(S)}[F(\mu)+I(\mu)]+\varepsilon.
\end{equation*}
This concludes the proof of the Laplace principle lower bound.
\end{proof}

\subsection*{Acknowledgments}
We thank Professors I.~ Kontoyiannis and S.~P.~Meyn for comments on the first version of the paper, and for pointing out their previous work \cite{KM03, KM05}, and Prof.\ A.~Wang for insightful comments that lead to a refinement of Assumption \ref{ass:proposalDistributionAbsCont}.

The research of FN and PN was supported by the Swedish e-Science Research Centre (SeRC). PN was also supported by Wallenberg AI, Autonomous Systems and Software Program (WASP) funded by the Knut and Alice Wallenberg Foundation, and by the Swedish Research Council (VR-2018-07050).

\bibliographystyle{alpha}
\bibliography{references}

\newcommand{\etalchar}[1]{$^{#1}$}
\begin{thebibliography}{ALPW22b}

\bibitem[AdFDJ03]{AndrieuEtAl03}
C.\ Andrieu, N.\ de~Freitas, A.\ Doucet, and M.~I. Jordan.
\newblock An introduction to {MCMC} for machine learning.
\newblock {\em Machine Learning}, 50(1):5--43, 2003.

\bibitem[AG07]{AG07}
S.~Asmussen and P.~W. Glynn.
\newblock {\em Stochastic simulation : algorithms and analysis}.
\newblock Stochastic modelling and applied probability. Springer, New York,
  2007.

\bibitem[ALPW22a]{AndrieuChristophe2022Ecbf}
C.~Andrieu, A.~Lee, S.~Power, and A.~Q.~ Wang.
\newblock Explicit convergence bounds for {M}etropolis {M}arkov chains:
  isoperimetry, spectral gaps and profiles.
\newblock {\em Preprint; arXiv:2211.08959}, 2022.

\bibitem[ALPW22b]{AndrieuChristophe2022PifM}
C.~Andrieu, A.~Lee, S.~Power, and A.~Q.~ Wang.
\newblock Poincar\'e inequalities for {M}arkov chains: a meeting with
  {C}heeger, {L}yapunov and {M}etropolis.
\newblock {\em Preprint; arXiv:2208.05239}, 2022.

\bibitem[BD19]{BudhirajaAmarjit2019AaAo}
A.~Budhiraja and P.~Dupuis.
\newblock {\em Analysis and Approximation of Rare Events: Representations and
  Weak Convergence Methods}, volume~94 of {\em Probability Theory and
  Stochastic Modelling Ser}.
\newblock Springer, New York, NY, 2019.

\bibitem[Bea19]{Beau19}
M.~A.~Beaumont.
\newblock Approximate Bayesian computation.
\newblock {\em Annual review of statistics and its application}, 6:379--403,
  2019.

\bibitem[Bes94]{Besag94}
J.~Besag.
\newblock Comments on “Representations of knowledge in complex systems” by
  U.~Grenander and M.~I.~Miller.
\newblock {\em J. Roy. Statist. Soc. Ser. B}, 56(591-592):4, 1994.

\bibitem[Bie16]{BierkensJoris2016NM}
J.~Bierkens.
\newblock Non-reversible {M}etropolis-{H}astings.
\newblock {\em Stat.\ Comput.}, 26(6):1213--1228, 2016.

\bibitem[BNS21]{BierkensNyquistSchlottke}
J.~Bierkens, P.~Nyquist, and M.~C.~Schlottke.
\newblock {Large deviations for the empirical measure of the zig-zag process}.
\newblock {\em Ann.\ Appl.\ Probab.}, 31(6):2811--2843, 2021.

\bibitem[BR08]{BedardRosenthal08}
M.\ B\'{e}dard and J.~S.~Rosenthal.
\newblock Optimal scaling of {M}etropolis algorithms: heading toward general
  target distributions.
\newblock {\em Canad. J. Statist.}, 36(4):483--503, 2008.

\bibitem[Bre92]{Breiman}
L.~Breiman.
\newblock {\em Probability}.
\newblock Society for Industrial and Applied Mathematics, 1992.

\bibitem[Buc04]{Bucklew04}
J.~A.~Bucklew.
\newblock {\em Introduction to rare event simulation}.
\newblock Springer Series in Statistics. Springer-Verlag, New York, 2004.

\bibitem[CRR05]{CRR05}
O.~F.~Christensen, G.~O.~Roberts, and J.~S.~Rosenthal.
\newblock Scaling limits for the transient phase of local metropolis--hastings
  algorithms.
\newblock {\em  J.~Roy.\ Stat.\ Soc.\ Ser.\ B}, 67(2):253--268, 2005.

\bibitem[DHN00]{DiaconisHolmesNeal00}
P.~Diaconis, S.~Holmes, and R.~M. Neal.
\newblock Analysis of a nonreversible {M}arkov chain sampler.
\newblock {\em Ann.\ Appl.\ Probab.}, 10(3):726--752, 2000.


\bibitem[DDN18]{DollDupuisNyquist}
J.~D.~Doll, P.~Dupuis and P.~Nyquist.
\newblock A large deviation analysis of certain qualitative properties of parallel tempering and infinite swapping algorithms.
\newblock {\em Appl.\ Math.\ Optim.}, 78(1):103--144, 2018

\bibitem[DMPS18]{DMPS18}
R.~Douc, E.~Moulines, P.~Priouret, and P.~Soulier.
\newblock {\em Markov chains}.
\newblock Springer, 2018.

\bibitem[DV75]{DV75a}
M.~D.~Donsker and S.~R.~S.~Varadhan.
\newblock Asymptotic evaluation of certain Markov process expectations for
  large time, {I}.
\newblock {\em Commun.\ Pure Appl.\ Math.}, 28(1):1--47, 1975.

\bibitem[DV75b]{DV75b}
M.~D.~Donsker and S.~R.~S.~Varadhan.
\newblock Asymptotic evaluation of certain Markov process expectations for
  large time, {III}.
\newblock {\em Commun.\ Pure Appl.\ Math.}, 29(4):389--461,
  1976.

\bibitem[DV76]{DV76}
M.~D.~Donsker and S.~R.~S.~Varadhan.
\newblock Asymptotic evaluation of certain Markov process expectations for
  large time, {III}.
\newblock {\em Commun.\ Pure Appl.\ Math.}, 29(4):389--461, 1976.

 
 \bibitem[DE97]{DupuisEllis}
P.~Dupuis and R.~S.~Ellis.
\newblock {\em A weak convergence approach to the theory of large deviations}.
\newblock Wiley series in probability and mathematical statistics. Wiley, New
  York, 1997. 
  
\bibitem[DL15]{DupuisLiu2015}
P.~Dupuis and Y.~Liu.
\newblock {On the large deviation rate function for the empirical measures of
  reversible jump Markov processes}.
\newblock {\em Ann.\ Probab.}, 43(3):1121--1156, 2015.
  
 \bibitem[DLPD12]{DupuisDollLiu}
P.~Dupuis, Y.~Liu, N.~Plattner, and J.~D.~Doll.
\newblock On the infinite swapping limit for parallel tempering.
\newblock {\em Multiscale Model.\ Simul.}, 10(3):986--1022, 2012.

\bibitem[DW22]{DupuisWu}
P.~Dupuis and G.-J.~Wu.
\newblock Analysis and optimization of certain parallel monte carlo methods in
  the low temperature limit.
\newblock {\em Multiscale Model.\ Simul.}, 20(1):220--249, 2022.

\bibitem[DZ94]{DZ94}
A.~Dembo and O.~Zeitouni.
\newblock Large deviations techniques and applications.
\newblock {\em Applications of Mathematics (New York)}, 38, 1994.

\bibitem[FdSHS93]{FdSH+93}
A.~Frigessi, P.~di~Stefano, C.-R.\ Hwang, and S.\~J. Sheu.
\newblock Convergence rates of the {G}ibbs sampler, the {M}etropolis algorithm
  and other single-site updating dynamics.
\newblock {\em J. Roy. Statist. Soc. Ser. B}, 55(1):205--219, 1993.

\bibitem[FHPS10]{FHP+10}
B.~Franke, C.-R.~Hwang, H.-M.~Pai, and S.-J.~Sheu.
\newblock The behavior of the spectral gap under growing drift.
\newblock {\em Trans. Amer. Math. Soc.}, 362(3):1325--1350, 2010.

\bibitem[FK06]{FK06}
J.~Feng and T.~G.~Kurtz.
\newblock {\em Large Deviations for Stochastic Processes}.
\newblock Mathematical surveys and monographs. American Mathematical Society,
  2006.

\bibitem[GGR97]{GGR97}
A.~Gelman, W.~R.~Gilks, and G.~O.~Roberts.
\newblock Weak convergence and optimal scaling of random walk metropolis
  algorithms.
\newblock {\em Ann.\ Appl.\ Probab.}, 7(1):110--120, 1997.

\bibitem[Has70]{Hastings70}
W.~Hastings.
\newblock Monte Carlo sampling methods using Markov chains and their application.
\newblock {\em Biometrika}, 57:97--109, 1970.

\bibitem[HHMS05]{HHMS+05}
C.-R.~Hwang, S.-Y.~Hwang-Ma, and S.-J.~Sheu.
\newblock Accelerating diffusions.
\newblock {\em Ann.\ Appl.\ Probab.}, 15(2):1433--1444, 2005.


\bibitem[KM03]{KM03}
I.~Kontoyiannis and S.~P.~Meyn.
\newblock Spectral theory and limit theorems for geometrically ergodic Markov processes.
\newblock {\em Ann.\ Appl.\ Probab.}, 13(1):304--362, 2003.


\bibitem[KM05]{KM05}
I.~Kontoyiannis and S.~P.~Meyn.
\newblock Large deviations asymptotics and the spectral theory of multiplicatively regular Markov processes.
\newblock {\em Electron.\ J.\ Probab.}, 10:61--123, 2005.

\bibitem[MMPT03]{MMPS03}
P.~Marjoram, J.~Molitor, V.~Plagnol, and S.~Tavar{\'e}.
\newblock Markov chain Monte Carlo without likelihoods.
\newblock {\em Proc.~Natl.~Acad.~Sci.~U.S.A.},
  100(26):15324--15328, 2003.

\bibitem[MRR{\etalchar{+}}53]{Metropolis53}
N.~Metropolis, A.~W. Rosenbluth, M.~N. Rosenbluth, A.~H. Teller, and E.~Teller.
\newblock {Equation of State Calculations by Fast Computing Machines}.
\newblock {\em J.~Chem.\ Phys.}, 21(6):1087, 1953.

\bibitem[MT96]{MengersenTweedie}
K.~L. Mengersen and R.~L. Tweedie.
\newblock {Rates of convergence of the Hastings and Metropolis algorithms}.
\newblock {\em Ann.\ Stat.}, 24(1):101--121, 1996.

\bibitem[MT12]{MT12}
S.~P.~Meyn and R.~L.~Tweedie.
\newblock {\em Markov chains and stochastic stability}.
\newblock Springer Science \& Business Media, 2012.

\bibitem[PDD{\etalchar{+}}11]{Plattner11}
N.~Plattner, J.~D.~Doll, P.~Dupuis, H.~Wang, Y.~Liu, and J.~E. Gubernatis.
\newblock An infinite swapping approach to the rare-event sampling problem.
\newblock {\em J.\ Chem.\ Phys.}, 135(13):134111, 2011.

\bibitem[RBS15a]{Rey-Bellet2015}
L.~Rey-Bellet and K.~Spiliopoulos.
\newblock Irreversible Langevin samplers and variance reduction: a large
  deviations approach.
\newblock {\em Nonlinearity}, 28(7):2081, 2015.

\bibitem[RBS15b]{Rey-BelletLuc2015Vrfi}
L.~Rey-Bellet and K.~Spiliopoulos.
\newblock Variance reduction for irreversible langevin samplers and diffusion
  on graphs.
\newblock {\em Electron.\ Commun.\ Probab.}, 20, 2015.

\bibitem[RBS16]{Rey-BelletLuc2016ItCo}
L.~Rey-Bellet and K.~Spiliopoulos.
\newblock Improving the convergence of reversible samplers.
\newblock {\em J.\ Stat.\ Phys.}, 164(3):472--494, 2016.

\bibitem[RC04]{RobertChristianP2004MCSM}
C.~P.~Robert and G.~Casella.
\newblock {\em Monte Carlo Statistical Methods}, second ed.
\newblock Springer New York, New York, NY, 2004.

\bibitem[Ros03]{Rosenthal03}
J.~S.~Rosenthal.
\newblock {Asymptotic variance and convergence rates of nearly-periodic Markov
  chain Monte Carlo algorithms}.
\newblock {\em J.\ Am.\ Stat.\ Assoc.}, 98(461):169--177, 2003.

\bibitem[RR97]{RR97}
G.~O.~Roberts and J.~S.~Rosenthal.
\newblock {Geometric Ergodicity and Hybrid Markov Chains}.
\newblock {\em Electron.\ Commun.\ Probab.}, 2:13--25, 1997.

\bibitem[RR98]{RR98}
G.~O.~Roberts and J.~S.~Rosenthal.
\newblock Optimal scaling of discrete approximations to Langevin diffusions.
\newblock {\em J.~Roy.\ Stat.\ Soc.\ Ser.\ B}, 60(1):255--268, 1998.

\bibitem[RR01]{RR01}
G.~O.~Roberts and J.~S.~Rosenthal.
\newblock Optimal scaling for various Metropolis-Hastings algorithms.
\newblock {\em Stat.\ Sci.}, 16(4):351--367, 2001.

\bibitem[RT96a]{RT96b}
G.~O.~Roberts and R.~L.~Tweedie.
\newblock Exponential convergence of Langevin distributions and their discrete
  approximations.
\newblock {\em Bernoulli}, 2(4):341--363, 1996.

\bibitem[RT96b]{RT96}
G.~O.~Roberts and R.~L.~Tweedie.
\newblock Geometric convergence and central limit theorems for multidimensional
  Hastings and Metropolis algorithms.
\newblock {\em Biometrika}, 83(1):95--110, 1996.

\bibitem[Tie98]{Tierney98}
L.~Tierney.
\newblock {A note on Metropolis-Hastings kernels for general state spaces}.
\newblock {\em Ann.\ Appl.\ Probab.}, 8(1):1--9, 1998.

\bibitem[VM20]{VM20}
M.~Vialaret and F.~Maire.
\newblock {On the Convergence Time of Some Non-Reversible Markov Chain Monte
  Carlo Methods}.
\newblock {\em Methodol.\ Comput.\ Appl.\ Probab }, 22:1349–1387, 2020.

\end{thebibliography}

\end{document}